\DeclareMathOperator{\capa}{cap}
\newcommand{\defeq}{\mathrel{\mathop:}=}
\newcommand{\E}{{\mathcal E}}
\newcommand{\I}{{\mathbf I}}
\newcommand{\K}{{\mathcal K}}
\newcommand{\G}{{\mathbf G}}
\newcommand{\R}{{\mathbb R}}
\newcommand{\M}{\mathcal{M}^{+}(\R^n)}
\begin{document}
\title[Sublinear equations and Schur's test ] {Sublinear equations and Schur's test\\ 
for integral operators}

\author{ Igor E. Verbitsky}
\address{Department of Mathematics, University of Missouri, Columbia, MO  65211, USA}
\email{verbitskyi@missouri.edu}

\subjclass[2010]{Primary 35J61, 42B37; Secondary 31B15, 42B25}
\keywords{Weighted norm inequalities, sublinear elliptic equations, weak maximum principle, Green's function, fractional Laplacian}
\begin{abstract}
	We study weighted norm inequalities of $(p,r)$-type, 
		\[ \Vert \mathbf{G} (f \, d \sigma) \Vert_{L^r(\Omega, d\sigma)} \le C \Vert f \Vert_{L^p(\Omega, \sigma)}, \quad \text{ for all } f \in L^p(\sigma), \]
			for $0 < r < p$ and $p>1$, where $\mathbf{G}(f d \sigma)(x)=\int_\Omega G(x, y) f(y) d \sigma(y)$  is an integral operator associated with a nonnegative kernel $G(x,y)$ on $\Omega\times \Omega$, and $\sigma$  is a locally finite 
			positive measure in $\Omega$. 
	
We show that this  embedding holds if and only if 
\[
\int_\Omega (\mathbf{G} \sigma)^{\frac{pr}{p-r}} d \sigma<+\infty,
\]
provided $G$ is a quasi-symmetric kernel which satisfies the weak maximum principle. 

In the case $p=\frac{r}{q}$, where $0<q<1$, we prove that this condition characterizes 
the existence of a non-trivial solution (or supersolution) $u \in L^r(\Omega, \sigma)$, for 
$r>q$, to the 
 the sublinear integral equation
		\[ u - \mathbf{G}(u^q \, d \sigma) = 0, \quad u \ge 0. \] 
		
		We also give some counterexamples  
		in the end-point case $p=1$, which corresponds to solutions 
		$u \in L^q (\Omega, \sigma)$ of this integral equation, studied recently in 
		\cite{QV1}, \cite{QV2}. These problems appear in the investigation of weak solutions to  the  sublinear equation involving the (fractional) Laplacian,  
		\[ (-\Delta)^{\alpha} u -  \sigma \, u^q = 0, \quad u \ge 0, \]
	for $0<q<1$ and  $0 < \alpha < \frac{n}{2}$ in domains $\Omega \subseteq \R^n$ with a positive Green function. 
\end{abstract}

\dedicatory{In memory of Professor Victor Havin}

\maketitle

\numberwithin{equation}{section}
\newtheorem{theorem}{Theorem}[section]
\newtheorem{lemma}[theorem]{Lemma}
\newtheorem{remark}[theorem]{Remark}
\newtheorem{cor}[theorem]{Corollary}
\newtheorem{prop}[theorem]{Proposition}
\newtheorem{defn}[theorem]{Definition}
\allowdisplaybreaks

\section{Introduction}
	\noindent
	Let $\Omega$ be a locally compact, Hausdorff space.  
	For a positive, lower semicontinuous kernel 
	$G\colon \Omega\times \Omega\to (0, +\infty]$, we denote by 
	$$\G (f \, d \sigma)(x) =\int_{\Omega} G(x, y) \, f(y) \, d \sigma(y), \quad x \in \Omega, $$ 
	 the corresponding integral operator, where  $\sigma \in \mathcal{M}^+(\Omega)$, the class of   
	 positive locally finite Radon measures in $\Omega$.  
	 
		We study the $(p, r)$-weighted norm inequalities 
	\begin{equation}\label{strong-type}
		\Vert \mathbf{G} (f d\sigma) \Vert_{L^r(\Omega, \sigma)} \le C \, \Vert f \Vert_{L^p(\Omega, \sigma)}, \quad \forall f \in L^p(\Omega, \sigma), 
	\end{equation}
	in the case $0<r<p$ and $p \ge 1$,  
where $C$ is a positive constant which does not depend on $f$.

	The main goal of this paper is to find explicit characterizations of  \eqref{strong-type} in terms of  $\mathbf{G} \sigma $ 
	under certain assumptions on $G$. We also  study connection  of  inequality \eqref{strong-type} with 
	$p=\frac{r}{q}$, where $0<q<1$, to 
	 the existence of a positive function $u \in L^r(\Omega, \sigma)$ such that 
	\begin{equation}
			\label{super_sol} 
			u \ge  \G(u^q \sigma) \quad d \sigma-\text{a.e.}  \, \,  \text{ in $\Omega$}, 
				\end{equation}
				in the case $r>q$. In other words, $u$ is a supersolution for the sublinear integral equation
		\begin{equation}
			\label{int_eqn} 
			u - \G(u^q \sigma) =0, \quad 0<u<+\infty \quad d \sigma-\text{a.e.}  \, \,  \text{ in $\Omega$}, 
				\end{equation}
				where $0<q<1$.

	In this paper, we assume that the kernel $G$ of the integral operator is quasi-symmetric, and  satisfies a weak maximum principle (WMP); see Sec. \ref{background}. 
	Such restrictions are satisfied by the Green kernel associated with many elliptic operators, including the fractional Laplacian 
	$(- \Delta)^{\alpha}$, as well as quasi-metric kernels, and 
	radially symmetric, decreasing convolution kernels 
	$G(x,y) = k(|x-y|)$ on $\mathbb{R}^n$ (see, e.g.,  \cite{AH}, \cite{An}, \cite{Maz}, \cite{QV1}, \cite{QV2} and the literature cited there).
	
	If $G$ is Green's kernel associated with the Laplacian in an open domain $\Omega \subseteq \mathbb{R}^n$, \eqref{int_eqn}  
	 is equivalent to  the sublinear elliptic boundary value problem    
		\begin{equation}
			\label{lap_eqn}
		 \begin{cases}
			- \Delta u - \sigma u^q=0, &u>0 \, \text{ in } \Omega, \\
			u = 0 & \text{ on } \partial \Omega,
		\end{cases} 
		\end{equation}
	where $0 < q < 1$. 
	
	We observe that solutions $u \in L^r(\Omega, \sigma)$ to \eqref{lap_eqn} in the case $r=1+q$ correspond to 
	finite energy solutions $u \in L^{1, 2}_{0} (\Omega)$ in the Dirichlet space, i.e., 
	$$
	\int_\Omega |\nabla u|^2 dx <+\infty,
	$$
	where $u$ has zero boundary values (see \cite{CV1}).

	The more difficult  end-point case $p=1$ of \eqref{strong-type},  
	along with  solutions  $u \in L^q(\Omega, \sigma)$ in the case $r=q$, was  studied recently 
	in \cite{QV1}, \cite{QV2}. After a certain modification, it leads to 
	solutions $u \in L^q_{{\rm loc}}(\Omega, \sigma)$, i.e., 
	all solutions to \eqref{int_eqn}, or \eqref{lap_eqn} understood in a weak sense (see \cite{MV}). For  Riesz kernels on $\Omega=\R^n$  such 
	$(1, q)$-weighted norm 
	inequalities,  along with  weak solutions to the 
	sublinear problem 
	\begin{equation}
			\label{frac_lap_eqn}
		 \begin{cases}
			(- \Delta)^{\alpha} u - \sigma u^q=0, \quad u>0 \, \text{ in } \R^n, \\
			\displaystyle{\liminf_{x \to \infty}} \,\,  u = 0,  \quad u \in L^q_{{\rm loc}}(\sigma), &
		\end{cases} 
		\end{equation}
	for $0<\alpha<\frac{n}{2}$, were
	treated earlier in \cite{CV1}, \cite{CV2}, \cite{CV3}. 
		
		Our main result is the following theorem. 
		
		\begin{theorem}\label{strong-thm}
		Let $\sigma \in \mathcal{M}^{+}(\Omega)$. 
		 Suppose $G$ is a positive, quasi-symmetric, lower semicontinuous kernel on $\Omega\times\Omega$ which satisfies the weak maximum principle.

		(i) If  $\, 1<p<+\infty$ and $0<r<p$, then the $(p, r)$-weighted norm inequality \eqref{strong-type} holds if and only if 
					\begin{equation}\label{r-p} 
\int_\Omega (\mathbf{G} \sigma)^{\frac{pr}{p-r}} d \sigma<+\infty. 
\end{equation}

		(ii) If  $\, 0<q<1$ and $q<r < \infty$,  then there exists a positive (super)solution $u\in L^r(\Omega, d \sigma)$ to \eqref{int_eqn} if and only if \eqref{strong-type} holds with $p=\frac{r}{q}$, or equivalently, 
		\begin{equation}\label{r-q} 
\int_\Omega (\mathbf{G} \sigma)^{\frac{r}{1-q}} d \sigma<+\infty. 
\end{equation} 
		\end{theorem}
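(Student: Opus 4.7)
The linchpin of both parts is the pointwise iteration estimate
\[
\G\big((\G\sigma)^{s}\,d\sigma\big)(x) \asymp (\G\sigma(x))^{s+1}, \qquad s>0,
\]
valid $\sigma$-a.e.\ on $\Omega$, with constants depending only on $s$ and on the quasi-symmetry and WMP constants. This equivalence is a consequence of quasi-symmetry together with the weak maximum principle (see \cite{QV1,QV2}); it effectively linearizes the iterated action of $\G$ against powers of $\G\sigma$, and the entire argument will be organized around it, with truncations $\sigma|_K$ for compacts $K\nearrow\Omega$ used to guarantee finiteness before passing to the limit.

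For the \emph{necessity} in part (i), I would test \eqref{strong-type} with $f_K=\chi_K(\G(\sigma|_K))^{r/(p-r)}$. The lower bound in the iteration estimate yields $\G(f_K\,d\sigma)\gtrsim (\G(\sigma|_K))^{p/(p-r)}$ on $K$, and \eqref{strong-type} then reduces to $A_K\le C\, A_K^{r/p}$ with $A_K=\int_K(\G(\sigma|_K))^{pr/(p-r)} d\sigma$; since $r<p$ and $A_K<\infty$, dividing out gives a uniform bound on $A_K$, and monotone convergence yields \eqref{r-p}. For \emph{sufficiency}, I would start with the pointwise H\"older split $\G(f\,d\sigma)\le(\G\sigma)^{1/p'}\,\G(f^p d\sigma)^{1/p}$, integrate against $d\sigma$, and apply the iteration identity (together with Fubini and quasi-symmetry to swap $\G$ for $\G^*$) to absorb the extra power of $\G\sigma$, closing the inequality by $C\|f\|_{L^p(\sigma)}^r$ where $C$ depends on the integral in \eqref{r-p}. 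When $r\ge 1$ one may alternatively dualize and use $\G^*:L^{r'}(\sigma)\to L^{p'}(\sigma)$; the range $0<r<1$ is then recovered from there by a truncation argument.

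For part (ii), the natural ansatz for a supersolution is $u_0=c(\G\sigma)^{1/(1-q)}$. Under \eqref{r-q} this function lies in $L^r(\sigma)$, and the iteration identity with $s=q/(1-q)$ gives
\[
\G(u_0^q\,d\sigma)=c^q\,\G\!\left((\G\sigma)^{q/(1-q)} d\sigma\right)\le C_q\, c^q (\G\sigma)^{1/(1-q)}=C_q\, c^{q-1}u_0,
\]
so that $u_0\ge \G(u_0^q\,d\sigma)$ provided $c\ge C_q^{1/(1-q)}$, settling the existence direction. Conversely, given any supersolution $u\ge \G(u^q d\sigma)$ with $u\in L^r(\sigma)$ and $u>0$ $\sigma$-a.e., I would bootstrap: an initial lower bound $u\gtrsim (\G(\sigma|_K))^{a_0}$ on a suitable compact $K$ is fed back into the supersolution inequality, and each round of the iteration identity upgrades the exponent from $a$ to $aq+1$. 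Since $q<1$, the affine map $a\mapsto aq+1$ contracts to the fixed point $1/(1-q)$, yielding $u\ge c(\G\sigma)^{1/(1-q)}$ $\sigma$-a.e.; combined with $u\in L^r(\sigma)$ this forces \eqref{r-q}, and part (i) with $p=r/q$ identifies it with the $(r/q,r)$-inequality.

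The main obstacle is the bootstrap in part (ii): one must produce a nontrivial initial pointwise lower bound for $u$, ensure that the iteration identity applies at each stage, and pass to the limit $a\to 1/(1-q)$ without losing uniform constants. All three steps lean on quasi-symmetry and the weak maximum principle, and it is the contraction property of $a\mapsto aq+1$ --- valid precisely because $q<1$ --- that forces the exponent to its sharp value.
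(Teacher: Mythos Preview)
Your necessity arguments in (i) and (ii) are essentially those of the paper. The gap is in the two-sided iteration estimate you take as the ``linchpin'': only the lower bound
\[
(\G\sigma)^{s+1}\;\lesssim\;\G\!\big((\G\sigma)^{s}\,d\sigma\big)
\]
is available (this is exactly \eqref{iter-s} with $s+1$ in place of $s$), while the upper bound $\G((\G\sigma)^{s}d\sigma)\lesssim(\G\sigma)^{s+1}$ is false in general, even $\sigma$-a.e., with constants depending only on the quasi-symmetry and WMP constants. A two-point kernel $G(i,j)=1$ for $i=j$ and $G(i,j)=\epsilon$ for $i\neq j$ on $\{1,2\}$ is symmetric and satisfies the WMP with $h=1$ for every $\epsilon\in(0,1)$; taking $\sigma=M\delta_1+\delta_2$ with $M$ large one checks that $\G((\G\sigma)^{s}d\sigma)(2)/(\G\sigma(2))^{s+1}\sim\epsilon^{-s}$, so no universal constant can work. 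This invalidates two of your steps. In part (ii), you want $u_0=c(\G\sigma)^{1/(1-q)}$ to be a \emph{super}solution for large $c$; that is precisely the upper bound with $s=q/(1-q)$. The paper does the opposite: for small $c$ the lower bound makes $u_0$ a \emph{sub}solution, one iterates $u_{j+1}=\G(u_j^{q}d\sigma)\uparrow u$, and uniform $L^r$ control of the iterates comes from the $(r/q,r)$-inequality already established in part (i), not from any pointwise upper bound.

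For the sufficiency in (i), your pointwise H\"older split $\G(f d\sigma)\le(\G\sigma)^{1/p'}(\G(f^p d\sigma))^{1/p}$ followed by Fubini yields, after a second H\"older, the wrong exponent on $\G\sigma$ unless one again inserts the missing upper bound; moreover the dual formulation $\G^{*}\colon L^{r'}\to L^{p'}$ is of the same $(p',r')$-type with $p'<r'$ and does not reduce the problem. The paper's route here is genuinely different and is the substantive part of the proof: one writes
\[
\int_\Omega[\G(fd\sigma)]^{r}d\sigma\le\Big(\int_\Omega\Big(\tfrac{\G(fd\sigma)}{\G\sigma}\Big)^{p}d\sigma\Big)^{r/p}\Big(\int_\Omega(\G\sigma)^{pr/(p-r)}d\sigma\Big)^{1-r/p},
\]
and proves $L^p$-boundedness of $f\mapsto\G(fd\sigma)/\G\sigma$ by interpolating the trivial $L^\infty$ bound with a weak-$(1,1)$ estimate. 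The latter is obtained via equilibrium measures: for $K\subset\{\G(fd\sigma)/\G\sigma>t\}$ one picks $\mu$ with $\G\mu\ge 1$ n.e.\ on $K$ and $\G\mu\le 1$ on $\operatorname{supp}\mu$, then Fubini and the WMP give $\sigma(K)\le h\,t^{-1}\|f\|_{L^1(\sigma)}$. This capacitary step is what replaces the unavailable upper half of your iteration identity.
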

	
	\begin{remark}\label{rm1} {\rm We observe that the ``if'' parts of statements (i) and (ii) of Theorem~\ref{strong-thm} 
	 fail if $p=1$, and $r=q$, respectively. 	The ``only if'' parts hold for all $0<r<p$ in statement (i), 
	 and $r>0$ in statement (ii).}
	    \end{remark} 
	    
	    \begin{remark}\label{rm2}
	    {\rm It is known that inequality \eqref{strong-type} with $p=\frac{r}{q}\ge 1$ in the case $0<q<1$ yields the existence of a positive supersolution $u\in L^r(\Omega, \sigma)$ for \eqref{super_sol}. This statement  follows from a  lemma due to Gagliardo \cite{G}, and does not require $G$ to be quasi-symmetric or to satisfy the WMP (see Sec. \ref{sec3} below). However, the converse statement does not hold without the WMP (see  \cite{QV2} in the case $r=q$).}
	    \end{remark}
	
	 	\begin{remark}\label{rm3}  {\rm Without the assumption that $G$ satisfies the WMP, 
	 the ``only if'' parts of statement (i) (with $p=\frac{r}{q}\ge 1$) and 
	statement   (ii) (with $r \ge q$) hold only for $0<r \le 1-q^2$ (see 
	 Lemma \ref{lemma2} below).} 
	 \end{remark}
	 
	 In particular, if there exists a positive (super)solution $u \in L^q(\Omega, \sigma)$, then 
	 \eqref{r-q} holds 
	 with $r=q$ for $0<q \le q_0$, where $q_0 = \frac{\sqrt{5}-1}{2} = 0.61\ldots$ 
	 is the conjugate golden ratio. However, \eqref{r-q}  with $r=q$ generally 
	 fails (even for symmetric 
	 kernels) in the case $q_0<q<1$; the cut-off $q=q_0$ here is sharp  \cite{QV2}. 
	 
	 In  Sec. \ref{sec2} below,   
	 we discuss 
	 related results,  and provide some counterexamples in the case $p=1$.

\section{Kernels and potential theory}\label{background}

	Let $G\colon  \Omega \times  \Omega \rightarrow (0, +\infty]$ be a positive  kernel. 
	We will assume that $\Omega$ is a locally compact space Hausdorff space, and $G$ is lower semicontinuous, so that we can 
	apply elements of the classical potential theory developed for such kernels (see \cite{Brelot},   \cite{F}). Most of our 
	results hold for \textit{non-negative} kernels $G(x, y) \ge 0$.  In that case, some statements concerning the existence of positive solutions (rather than supersolutions)  require the additional assumption that $G$ is  non-degenerate; see \cite{QV2}.

	By $\mathcal{M}^+(\Omega)$ we denote the class of all nonnegative, locally finite, Borel measures on $\Omega$. We  use the notation $\operatorname{supp}(\nu)$ for the support of $\nu \in \mathcal{M}^+(\Omega)$ and $\Vert \nu \Vert = \nu (\Omega)$ if $\nu$ is a finite measure.

	For $\nu \in \mathcal{M}^+(\Omega)$, the potential of $\nu$ is defined by
		\[ \mathbf{G}\nu (x) \defeq \int_{\Omega} G(x,y) d\nu(y), \quad \forall x \in \Omega, \]
		and  the potential with the adjoint kernel
		\[ \mathbf{G}^*\nu (y) \defeq \int_{\Omega} G(x, y) \, d\nu(x), \quad \forall y \in \Omega. \]
		
	A positive kernel $G$ on $\Omega\times \Omega$ 
	is said to satisfy the \textit{weak maximum principle (WMP)} with constant $h\ge 1$ if,   for any $\nu \in \mathcal{M}^+(\Omega)$, 
		\begin{equation}\label{wmp}
		 \sup \Big\{\mathbf{G}\nu (x)  \colon x \in \operatorname{supp}(\nu) \Big\} \le M \, \Longrightarrow 
		 \sup \Big\{ \mathbf{G}\nu (x) \colon  x \in \Omega\Big\} \le h \, M,  
		 	\end{equation}
		for any constant $M>0$. When $h=1$,  $G$ is said to satisfy the \textit{strong maximum principle}. It holds for Green's kernels associated with 
		the classical Laplacian, or fractional Laplacian $(-\Delta)^{\alpha}$ in the case $0 < \alpha\le 1$, for all 
		domains $\Omega$ with positive Green's function. The WMP holds for Riesz kernels on $\R^n$ associated 
		with $(-\Delta)^{\alpha}$ 
		in the full range $0<\alpha<\frac{n}{2}$, and more generally for all radially non-increasing kernels 
		on $\R^n$ (see \cite{AH}). 
		
		The WMP also holds for the so-called quasi-metric kernels (see \cite{FNV}, 
		\cite{FV}, \cite{HN}, \cite{QV2}). We say that $d(x,y)\colon \,  \Omega \times \Omega \rightarrow [0, + \infty)$ satisfies the quasimetric triangle inequality with quasimetric constant $\kappa$ if 
		\begin{equation}\label{quasitr} 
			d(x,y) \le \kappa [d(x,z) + d(z, y)], 
		\end{equation}
		for any $x, y, z \in \Omega$.
	We say that $G$ is a \textit{quasimetric} kernel (with quasimetric constant $\kappa>0$) if $G$ is symmetric and $d(x,y) = \frac{1}{G(x,y)}$ satisfies \eqref{quasitr}.

	A kernel $G\colon \Omega \times \Omega \to (0, +\infty]$ is said to be \textit{quasi-symmetric} 
	if  there exists a constant $a$ such that
		\[ a^{-1} G(y,x) \le G(x,y) \le a \, G(y,x), \quad \forall x, y \in \Omega. \] 
		Many kernels associated with elliptic operators are quasi-symmetric and satisfy the WMP 
		(see \cite{An}). 
	
For $0 < q < 1$,  and $\sigma \in \mathcal{M}^+(\Omega)$, we are interested in \textit{positive solutions} $u \in L^r(\sigma)$ ($r>0$) to the integral equation
		\begin{align}\label{int-eq}
			u = \mathbf{G}(u^q \sigma), \quad u>0 \quad  d\sigma-a.e.
		\end{align}
		and \textit{positive supersolutions} $u \in L^r(\sigma)$ to the integral inequality 
		\begin{align}\label{int-sup}
			u \ge \mathbf{G}(u^q \sigma), \quad u>0 \quad d\sigma-a.e.
		\end{align}
	
	In \cite{QV2}, we characterized the existence of positive  solutions $u \in L^q(\Omega, \sigma)$ and 
	$u \in L^q_{\rm loc}(\sigma)$. The latter correspond to the so-called ``very weak''  
	solutions to the sublinear boundary value problem \eqref{lap_eqn} (see \cite{FV}, \cite{MV}). 
		It is easy to see that the condition  $u \in L^q_{\rm loc}(\sigma)$ is necessary for the 
		existence of any positive (super)solution, since otherwise $u\equiv+\infty$ $d \sigma$-a.e. (see \cite{QV2}).   
	
		For a measure $\lambda \in \mathcal{M}^+(\Omega)$, the \textit{energy of $\lambda$} is given by 
	\[
	\mathcal{E}(\lambda) \defeq \int_\Omega \mathbf{G}\lambda \, d\lambda.
	\]
		The notion of energy is closely related to another  major tool of potential theory, the capacity of a set, and the associated equilibrium measure.

	 For a kernel $G \colon \Omega\times \Omega \to (0, +\infty]$, we consider the \textit{Wiener capacity} 
		\begin{equation}\label{wiener} 
			\capa (K)  \defeq \sup \Big\{ \mu(K)\colon  \, \,   \mathbf{G}^*\mu(y) \le 1 \, \, \textrm{on} \, \, \operatorname{supp}(\mu), \, \, \mu \in \mathcal{M}^+(K)\Big \}, 	
		\end{equation}
		defined for compact sets  $K \subset \Omega$.

	The extremal measure $\mu$ for which the supremum in  \eqref{wiener} is attained is called the \textit{equilibrium measure}. Alternatively, capacity can be defined  as a solution to the following extremal problem involving energy:
		\begin{equation}\label{cap-energy} 
		 \capa(K) \defeq \left[ \inf \Big \{ \mathcal{E}(\mu)\colon \, \, \, \mu \in \mathcal{M}^+(K), \quad \mu(K) = 1 \Big\}\right]^{-1}. 	\end{equation}
	
	We say that a property holds \textit{nearly everywhere} (or n.e.) on $K$ when the exceptional set $Z \subset K$ where this property fails has zero capacity, $\capa(Z) = 0$.
	
	We will use the following fundamental theorem \cite{Brelot}, \cite{F}.
	
	\begin{theorem}\label{fuglede_thm} Let $G$ be a positive symmetric kernel on $\Omega\times \Omega$, and let $K\subset \Omega$ a compact set.  The two extremal  problems
		\begin{align*}
			&  \max \Big\{\lambda( K ) \colon \, \,  \mathbf{G}\lambda \le 1 \, \, \text{\textnormal{on}} \, \, \operatorname{supp}(\lambda),  \, \,   \lambda \in \mathcal{M}^+(K) \Big\},  \\
			& \max  \Big\{ 2 \lambda (K) - \mathcal{E}(\lambda) \colon  \, \, \lambda \in \mathcal{M}^+(K)\Big\}, 
		\end{align*}
		always have solutions,  which are precisely the same, and each maximum coincides  with the Wiener capacity $\capa K$.
		The class of all solutions consists of  measures $\lambda \in \mathcal{M}^+(K)$ for which \[ \mathcal{E}(\lambda) = \lambda(\Omega) = \capa(K). \]
		The potential of any solution has the following properties:
		\begin{enumerate}
			\item $\mathbf{G}\lambda(x) \ge 1 $ \text{\textnormal{n.e. in}} $K$, 
			\item $\mathbf{G}\lambda(x) \le 1$ \text{\textnormal{on}} $\operatorname{supp}(\lambda)$, 
			\item $\mathbf{G}\lambda(x) = 1$ $d \lambda\text{\textnormal{-a.e. in}}$ $\Omega$.
		\end{enumerate}
	\end{theorem}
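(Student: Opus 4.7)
The plan is to reduce everything to the energy minimization problem over probability measures on $K$, build the equilibrium measure from this minimizer, and then read off both extremal values and the three potential properties by first-variation arguments.

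First, I would establish existence by minimizing $\mathcal{E}(\mu)$ over $\{\mu \in \mathcal{M}^+(K) : \mu(K) = 1\}$. This class is vaguely compact (Banach--Alaoglu, since $K$ is compact), and because $G$ is positive and lower semicontinuous, the bilinear form $(\mu,\nu) \mapsto \iint G \, d\mu\, d\nu$ is vaguely lower semicontinuous (approximate $G$ from below by bounded continuous kernels and apply Fatou). A minimizer $\mu_0$ exists, and \eqref{cap-energy} identifies $\mathcal{E}(\mu_0) = 1/\capa(K)$. Setting $\lambda_0 := \capa(K)\, \mu_0$ produces the candidate equilibrium measure, with $\lambda_0(K) = \mathcal{E}(\lambda_0) = \capa(K)$.

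Next, I would derive the three potential properties by a first-variation argument at $\mu_0$. If the set $E_\epsilon := \{x \in K : \mathbf{G}\mu_0(x) < (1-\epsilon)/\capa(K)\}$ had positive capacity for some $\epsilon>0$, then shifting a small amount of mass of $\mu_0$ toward an admissible measure concentrated on $E_\epsilon$ would strictly decrease the energy, contradicting minimality; hence $\mathbf{G}\mu_0 \ge 1/\capa(K)$ nearly everywhere on $K$. The symmetric perturbation, removing mass from $\{\mathbf{G}\mu_0 > 1/\capa(K)\}\cap \operatorname{supp}(\mu_0)$, gives $\mathbf{G}\mu_0 \le 1/\capa(K)$ on $\operatorname{supp}(\mu_0)$. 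Rescaling to $\lambda_0$ yields (1) and (2). Since $\mathcal{E}(\lambda_0) < \infty$, the classical fact that positive measures of finite energy charge no polar set upgrades (1) to hold $d\lambda_0$-a.e., and combined with (2) on $\operatorname{supp}(\lambda_0)$ gives (3).

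To identify both extremal values with $\capa(K)$, I would apply \eqref{cap-energy} to $\lambda/\lambda(K)$ to get the key energy inequality $\mathcal{E}(\lambda) \ge \lambda(K)^2/\capa(K)$ for every nonzero $\lambda \in \mathcal{M}^+(K)$. If $\mathbf{G}\lambda \le 1$ on $\operatorname{supp}(\lambda)$, then $\mathcal{E}(\lambda) = \int \mathbf{G}\lambda\, d\lambda \le \lambda(K)$, and combining yields $\lambda(K) \le \capa(K)$, the first maximum. For the second, the elementary inequality $\lambda(K)^2/\capa(K) \ge 2\lambda(K) - \capa(K)$, equivalent to $(\lambda(K) - \capa(K))^2 \ge 0$, gives $2\lambda(K) - \mathcal{E}(\lambda) \le \capa(K)$, the second maximum. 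The measure $\lambda_0$ realizes equality in both problems, and in either case equality forces $\lambda(K) = \mathcal{E}(\lambda) = \capa(K)$ together with $\mathbf{G}\lambda = 1$ $d\lambda$-a.e., yielding the stated characterization. The principal obstacles are the two classical potential-theoretic ingredients used above — vague lower semicontinuity of $\mathcal{E}$ for general positive l.s.c.\ kernels, and the theorem that finite-energy positive measures do not charge polar sets — which I would invoke from \cite{Brelot, F} rather than reprove.
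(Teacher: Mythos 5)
The paper does not give a proof of this theorem: it is stated as a classical result and attributed directly to Brelot \cite{Brelot} and Fuglede \cite{F}, so there is no in-house argument to compare against. Your sketch is in fact the standard Gauss--Frostman variational proof that underlies those references, and the overall architecture — vague compactness plus lower semicontinuity of energy to obtain an energy-minimizing probability measure $\mu_0$, rescale to $\lambda_0 = \capa(K)\,\mu_0$, first variation for the two one-sided potential bounds, then the chain $\mathcal{E}(\lambda) \ge \lambda(K)^2/\capa(K)$ to identify both maxima with $\capa(K)$ — is correct and complete in outline.

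Two points are worth tightening. First, the ``symmetric perturbation'' step as stated gives $\mathbf{G}\mu_0 \le 1/\capa(K)$ only $d\mu_0$-a.e.; the upgrade to all of $\operatorname{supp}(\mu_0)$ needs one more sentence: since $\mathbf{G}\mu_0$ is lower semicontinuous, if it exceeded the threshold at some $x_0 \in \operatorname{supp}(\mu_0)$ it would do so on a whole neighborhood $U$ of $x_0$, and $\mu_0(U)>0$ would contradict the $\mu_0$-a.e.\ bound. Second, properties (1)--(3) are asserted for \emph{every} solution of either extremal problem, not just for the constructed $\lambda_0$. Your equality analysis shows any solution $\lambda$ satisfies $\mathcal{E}(\lambda) = \lambda(K) = \capa(K)$; to transfer the first-variation conclusions you should explicitly observe that this forces $\lambda/\capa(K)$ to be an energy minimizer (its energy is $1/\capa(K)$), so the same variational argument applies to it. This also closes the bidirectional identification of the two solution sets: (2) for $\lambda$ makes it admissible for the first problem, and $\lambda(K)=\capa(K)$ makes it a maximizer there. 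These are expository gaps rather than errors, and, as you say, the two genuinely nontrivial ingredients (vague lower semicontinuity of the energy form, and the fact that finite-energy measures charge no polar sets) are correctly deferred to \cite{Brelot}, \cite{F}.
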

	
		The extremal measure $\lambda$ in Theorem~\ref{fuglede_thm} is  the equilibrium measure for the set $K$.
We observe that since $G$ is a positive kernel, the capacity of all compact sets $K$ is finite.  
(This is true even for non-negative kernels if $G(x, x)>0$ for all $x \in \Omega$; see \cite{F}). 

\section{Weighted norm inequalities, supersolutions, and energy estimates}\label{sec3} 

We begin this section with a proof of Theorem  \ref{strong-thm}. We remark that the ``only if'' part of statement (i) of  Theorem  \ref{strong-thm} is proved without using the assumption that $G$ is quasi-symmetric. Furthermore, the proof of this part works in the case $p=1$ as well. 
		
	\begin{proof}[Proof of Theorem  \ref{strong-thm}] We first prove statement (i). If the $(p, r)$-inequality \eqref{strong-type} 
	holds for $0<r<p$, where $p \ge 1$,  then assuming that 
	$f=(\G \sigma)^{\frac{r}{p-r}} \in L^{p}(\Omega, \sigma)$ and using it as a test function, we deduce 
	$$
	\int_\Omega \left[ \G  \Big( (\G \sigma)^{\frac{r}{p-r}} d \sigma\Big)\right]^r d \sigma \le C^r \ 
 \left[  \int_\Omega (\G \sigma)^{\frac{pr}{p-r}} d \sigma \right]^{\frac{r}{p}}, 
	$$
	where $C$ is the embedding constant in  \eqref{strong-type}. We now use the pointwise inequality 
	\begin{equation}\label{iter-s} 
	 \Big [\G \sigma(x) \Big]^{s} \le s \, h^{s-1} \, \G  \Big( (\G \sigma)^{s-1} d \sigma\Big)(x), \quad x \in \Omega, 
	\end{equation}
	for all $s\ge 1$, established in \cite[Lemma 2.5 and Remark 2.6]{GV2} for non-negative kernels satisfying the WMP with constant $h\ge 1$.
	 Applying \eqref{iter-s}  with $s=\frac{p}{p-r}$, we obtain 
	$$
	\int_\Omega (\G \sigma)^{\frac{pr}{p-r}} d \sigma \le \Big(\frac{p}{p-r}\Big)^r h^\frac{r^2}{p-r} \, C^r  
 \left[  \int_\Omega (\G \sigma)^{\frac{pr}{p-r}} d \sigma \right]^{\frac{r}{p}}. 
	$$
	Since $0<r<p$, this estimate yields 
	$$
	\int_\Omega (\G \sigma)^{\frac{pr}{p-r}} d \sigma\le  \Big(\frac{p}{p-r}\Big)^{\frac{pr}{p-r}} h^\frac{pr^2}{(p-r)^2} \,  C^{\frac{pr}{p-r}}. 
	$$
	The extra assumption that 	$f=(\G \sigma)^{\frac{r}{p-r}} \in L^{p}(\Omega, \sigma)$ is easy to remove 
		by using $\chi_K \, f$ in place of $f$, where $K$ is a compact subset of $\Omega$ on which 
	$\G \sigma(x) \le n$, and then letting $n \to +\infty$  (see details in \cite{QV2}). 
	
	In the opposite direction, suppose that \eqref{r-p} holds for $0<r<p$ and $p>1$. 
	Without loss of generality we may assume that $f \ge 0$. By H\"older's 
	inequality,
	\begin{align*}
	\int_\Omega [\G( f d \sigma)]^r d \sigma & = \int_{\Omega}\left [\frac{\G( f d \sigma)}{\G \sigma}\right]^r 
	(\G \sigma)^r d \sigma \\ & \le \left[\int_{\Omega}\left (\frac{\G( f d \sigma)}{\G \sigma}\right)^p d \sigma\right]^{\frac{r}{p}}
	\left[\int_\Omega(\G \sigma)^{\frac{pr}{p-r}} d \sigma \right]^{1-\frac{r}{p}}.
		\end{align*}
	
	We next sketch a proof of a $(1,1)$-weak type estimate obtained  in a more general context  in \cite[Lemma 5.10]{QV2}: 
		\begin{equation}\label{weak-1-1} 
	\left \Vert \frac{\G( f d \sigma)}{\G \sigma} \right \Vert_{L^{1, \infty} (\Omega, d \sigma)} \le 
	c \, ||f||_{L^1(\Omega, d \sigma)}, 
		\end{equation} 
		where  $c=c(h, a)$ depends only on the constants $h\ge 1$ in the weak maximum principle, and 
		$a>0$ in the quasi-symmetry condition. 
		
		Since $G$ is quasi-symmetric, we can assume without loss of generality that it is symmetric by replacing $G$ with $\frac{1}{2}(G+ G^*)$. 
		Let  $E_t = \{ x \in \Omega\colon \frac{\G( f d \sigma)}{\mathbf{G}\sigma} 
		(x)  > t \}$, where $t>0$.  
				For an arbitrary compact set $K \subset E_t$, we denote by  $\mu \in \mathcal{M}^+(K)$ an equilibrium measure on $K$ (see Sec. \ref{background} above)  such that
			$\mathbf{G}\mu \ge 1$ n.e. on $K$ and $\mathbf{G}\mu \le 1$ on $\operatorname{supp} (\mu)$. 
			
				It is easy to see that  in fact 
				\begin{equation}\label{n.e.}
				\mathbf{G}\mu \ge 1 \quad d \sigma-\textrm{a.e. on} \,\,  K. 
				\end{equation} 
				Indeed, from \eqref{r-p} it follows that $\mathbf{G} \sigma<+\infty$ $d \sigma$-a.e. Since $\mathbf{G}\mu \ge 1$ n.e. on $K$, 
			 the set $Z = \{ x \in K\colon \,  \mathbf{G}\mu(x) < 1 \}$ has zero capacity, and 
			 consequently,  
			\begin{align*}
		\sigma (Z) &=\sigma (\{x\in Z \colon \,\,  \mathbf{G}\sigma(x)<+\infty\})
		\\& \le \sum_{n=1}^{+\infty} \sigma(\{x\in Z \colon \, \mathbf{G}\sigma(x) \le n\}) \\
		& \le  \sum_{n=1}^{+\infty} n \,  \operatorname{cap} (\{x \in Z \colon \, \mathbf{G}\sigma(x) \le n\}) =0.   
		\end{align*}
		Thus, $\sigma(Z)=0$, which proves \eqref{n.e.}.

	 Since $\mathbf{G}\mu \le 1$ on $\operatorname{supp}(\mu)$, 
		it follows that $\mathbf{G}\mu \le h$ on $\Omega$ by the WMP. From this and \eqref{n.e.}, using Fubini's theorem,  
		we deduce 
			\begin{align*}
				\sigma(K) & \le \int_K \G\mu \, d\sigma = \int_K \mathbf{G}\sigma_K \, d\mu \\
					&\le  \int_K \frac{\G( f d \sigma)}{t} \, d\mu = \frac{1}{t} \int_K \mathbf{G} \mu \, f \, d\sigma \\  &\le \frac{1}{t} \int_\Omega h \, f \, d\sigma = \frac{h}{t} \,  ||f||_{L^1(\Omega, \sigma)}. 
			\end{align*}	
		Taking the supremum over all  $K\subset E_t$, we obtain 
			\[ \sigma(E_t) \le \frac{h}{t} \,  ||f||_{L^1(\Omega, \sigma)}, \]
		which proves \eqref{weak-1-1}.

The corresponding $L^\infty$ estimate is obvious:
\begin{equation*}
	\left \Vert \frac{\G( f d \sigma)}{\G \sigma} \right \Vert_{L^{\infty} (\Omega, d \sigma)} \le 
	 ||f||_{L^\infty(\Omega, d \sigma)}. 
		\end{equation*}
Thus, for $1<p<+\infty$, by the Marcinkiewicz  interpolation theorem we obtain 
	$$
	\left \Vert \frac{\G( f d \sigma)}{\G \sigma}  \right \Vert_{L^{p} (\Omega, d \sigma)} \le 
	C \, ||f||_{L^p(\Omega, d \sigma)},  
	$$
	for all $f \in L^p(\Omega, d \sigma)$. Hence, combining the preceding estimates, we deduce 
	$$
	\int_\Omega [\G( f d \sigma)]^r d \sigma \le	C \, ||f||^{r}_{L^p(\Omega, \sigma)} 
	\left[\int_\Omega(\G \sigma)^{\frac{pr}{p-r}} d \sigma \right]^{1-\frac{r}{p}}.
		$$
		This proves statement (i). 
		
		We now prove statement (ii). Let $0<q<1$. Suppose there exists a positive supersolution 
		 $u \in L^r(\Omega, \sigma)$ with $r>q$. As shown in \cite[Corollary 3.6]{GV2}, 
		if $G$  satisfies 
		the WMP, then 
		any nontrivial supersolution $u$ satisfies the global pointwise bound 
		\begin{equation}\label{lower-est}
		u(x) \ge (1-q)^{\frac{1}{1-q}}  h^{-\frac{q}{1-q}}  \, [ \G \sigma (x)]^{\frac{1}{1-q}} \quad  d\sigma-\textrm{a.e.} 
		\end{equation}
		Thus,  
		\eqref{r-q} holds. 
		
		Conversely,  by statement (i), \eqref{r-q} with $r>q$ 
		implies the $(p, r)$-inequality \eqref{strong-type} with 
		$p=\frac{r}{q}$. Letting $u_0 = c \, [ \G \sigma (x)]^{\frac{1}{1-q}}$ where $c>0$ is a 
		positive constant, we get a sequence of iterations 
		$$
		u_{j+1} = \G(u_j^q \, d \sigma), \quad j=0, 1, \ldots, 
		$$
		where by induction we see that $u_{j+1} \ge u_j$, provided the constant $c$ is small enough.  
		Here the initial step $u_1\ge u_0$ follows from \eqref{iter-s} with $s=\frac{1}{1-q}$, since 
		$$
		u_1 = \G(u_0^q d \sigma)=c^q \, \G \Big[(\G \sigma)^{\frac{q}{1-q}}d \sigma\Big] \ge  c \, [ \G \sigma (x)]^{\frac{1}{1-q}}=u_0,
		$$ 
		for an appropriate choice of $c=c(q, h, a)$. By  \eqref{strong-type} with 
		$p=\frac{r}{q}$ and $f=u_j$, we have by induction, 
		\begin{align*}
		||u_{j+1}||_{L^r(\Omega, \sigma)} =
		  \Big \Vert\G(u_j^q \, d \sigma)\Big \Vert_{L^r(\Omega, \sigma)} 
		  \le C \, \Vert u_j\Vert^{q}_{L^r(\Omega, \sigma)}<+\infty. 
		   		\end{align*}
		Since $0<q<1$ and $u_j \le u_{j+1}$, it follows that 
		$$
		||u_{j+1}||_{L^r(\Omega, \sigma)} \le C(r, q, h, a), \quad j=0, 1, \ldots. 
		$$
		Using the monotone convergence theorem, we obtain a positive  solution 
		$$u=\lim_{j\to \infty} u_j, \quad u\in L^r(\Omega, \sigma).$$ 
		\end{proof}

	Theorem \ref{strong-thm}   makes use of  energy conditions of the type 
		\begin{equation} \label{energy}
			\int_{\Omega} (\G \sigma)^s d \sigma< \infty,  
		\end{equation}
		for some $s>0$. 
	Note that when $s = 1$, this gives the energy $\mathcal{E}(\sigma)$ introduced above.
		
	In the next lemma, we deduce \eqref{energy} for 
	 $s=\frac{r}{1-q}$ provided there exists a positive supersolution 
	 $u \in L^r(\Omega, \sigma)$ to  \eqref{super_sol}, for non-negative, quasi-symmetric kenels $G$, 
	without assuming that \eqref{strong-type} holds, or that $G$ satisfies the WMP. In the special case 
	$r=q$ it was proved in \cite[Lemma 5.1]{QV2}.

	\begin{lemma}\label{lemma2}
		Let $\sigma \in \mathcal{M}^+(\Omega)$, and let $0<q<1$. Suppose $G$ is a non-negative quasi-symmetric kernel on $\Omega\times \Omega$. Suppose there is a positive supersolution $u \in L^r(\Omega, \sigma)$ 
		$(r>0)$, i.e., $\mathbf{G} (u^q d \sigma)\le u$ $d \sigma$-a.e. Let $0 < q \le 1-r^2$.  Then 
			\begin{equation}\label{est1}
			\int_{\Omega} (\G \sigma)^{\frac{r}{1-q}} d \sigma \le a^{\frac{rq}{(1-q)(1-r+q)}} \int_{\Omega} 
			 u^{r} 
			d \sigma <+\infty,   
			\end{equation}
			where $a$ is the quasi-symmetry constant of $G$. 
		\end{lemma}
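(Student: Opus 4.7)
The plan is to bootstrap the supersolution condition $u \ge \mathbf{G}(u^q\,d\sigma)$ into the desired comparison between $\int u^r\,d\sigma$ and $\int (\mathbf{G}\sigma)^{r/(1-q)}\,d\sigma$ by iterating a pointwise Jensen-type estimate and exploiting the quasi-symmetry $\mathbf{G}^* \ge a^{-1}\mathbf{G}$. Note that the hypothesis $0 < q \le 1-r^2$ forces $r \le \sqrt{1-q} < 1$, which is crucial because it makes $t\mapsto t^{r'}$ concave for every exponent $r' \in (0,1]$ arising in the iteration.

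The basic pointwise inequality I would prove first is that, for any $r' \in (0,1]$,
$$u(x)^{r'} \;\ge\; \mathbf{G}\sigma(x)^{r'-1}\,\mathbf{G}(u^{qr'}\,d\sigma)(x).$$
This follows from raising the supersolution to the $r'$-th power together with Jensen's inequality for the concave function $t \mapsto t^{r'}$ applied to the probability measure $d\mu_x(y) = G(x,y)\,d\sigma(y)/\mathbf{G}\sigma(x)$, which gives
$$\mathbf{G}(u^q\,d\sigma)(x)^{r'} = \mathbf{G}\sigma(x)^{r'}\Bigl(\int u^q\,d\mu_x\Bigr)^{r'} \;\ge\; \mathbf{G}\sigma(x)^{r'-1}\,\mathbf{G}(u^{qr'}\,d\sigma)(x).$$

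I would start by applying this at $r'=r$, integrating against $d\sigma$, and transferring $\mathbf{G}$ to $\mathbf{G}^*$ via Fubini, picking up a factor $a^{-1}$ from quasi-symmetry. Iterating with $r'$ replaced successively by $qr,\,q^2r,\,\dots,\,q^n r$ (all still in $(0,1]$) and substituting into the occurrence of $u^{q^k r}$ at each stage, the $u$-exponent decays geometrically to $0$ while the exponents attached to $\mathbf{G}\sigma$ telescope into the partial sum $r(1+q+\cdots+q^{n-1}) \to r/(1-q)$. The assumption $q \le 1-r^2$, which can be rearranged as $qr \le (1-r)(1+q)$, is precisely what ensures that the Hölder-type rearrangements needed at each step of the iteration have admissible exponents, and that the cumulative factors of $a$ sum (as a geometric series with ratio $q$ and first term $rq/(1-r+q)$) to the finite value $rq/((1-q)(1-r+q))$ that appears in the claimed constant.

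The main obstacle will be organising the iteration so that the nested operators $\mathbf{G}(\mathbf{G}\sigma^{\bullet}\,d\sigma)$ produced at each step are disentangled and eventually collapse into a single power of $\mathbf{G}\sigma$ in the limit. To prevent any a~priori divergence when absorbing, I would first restrict to $\sigma|_K$ for a compact $K\subset\Omega$ on which $\mathbf{G}\sigma$ is bounded, run the finite iteration there where every quantity is finite, and then pass to the limit by monotone convergence along an exhaustion $K\uparrow\Omega$. Finally, letting $n\to\infty$ and using that $0 < u < +\infty$ $d\sigma$-a.e.\ (positivity by hypothesis; finiteness since $u\in L^r(\sigma)$), one has $u^{q^n r} \to 1$ $d\sigma$-a.e., which converts the iterated lower bound on $\int u^r\,d\sigma$ into the desired bound on $\int(\mathbf{G}\sigma)^{r/(1-q)}\,d\sigma$; the finiteness of the right-hand side then forces the finiteness asserted at the end of \eqref{est1}.
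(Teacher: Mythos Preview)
Your iterative scheme has a genuine gap at precisely the point you flag as the ``main obstacle.'' After one application of the pointwise Jensen inequality and Fubini you arrive at
\[
\int_\Omega u^r\,d\sigma \;\ge\; a^{-1}\int_\Omega u^{qr}\,\mathbf{G}\bigl((\mathbf{G}\sigma)^{r-1}\,d\sigma\bigr)\,d\sigma,
\]
and substituting the pointwise bound for $u^{qr}$ produces the integrand
\[
(\mathbf{G}\sigma)^{qr-1}\,\mathbf{G}(u^{q^2 r}\,d\sigma)\cdot \mathbf{G}\bigl((\mathbf{G}\sigma)^{r-1}\,d\sigma\bigr),
\]
which is a \emph{product of two distinct $\mathbf{G}$-potentials}. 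There is no mechanism, absent the weak maximum principle, to collapse $\mathbf{G}((\mathbf{G}\sigma)^{s}\,d\sigma)$ into a power of $\mathbf{G}\sigma$, so the claimed telescoping of exponents into $r(1+q+\cdots)$ never takes place; the nesting only deepens with each iteration. Your bookkeeping for the powers of $a$ is also inconsistent with the scheme you describe (one Fubini per step would give $a^{-n}$, not a geometric series of exponents), and the rearrangement ``$q\le 1-r^2 \iff qr\le(1-r)(1+q)$'' is false.

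The paper avoids iteration entirely. It writes
\[
\mathbf{G}\sigma(x)=\int u^{q/\gamma}\,u^{-q/\gamma}\,G(x,y)\,d\sigma(y)
\]
and applies H\"older with the single exponent $\gamma=1+\dfrac{q}{1-r}$, so that $-q/(\gamma-1)=r-1$ and
\[
\mathbf{G}\sigma \le u^{1/\gamma}\,\bigl[\mathbf{G}(u^{r-1}\,d\sigma)\bigr]^{1/\gamma'}.
\]
Raising to the power $r/(1-q)$, integrating, and applying one more H\"older (this is where $r\le 1-q^2$ is actually used) isolates the term $\displaystyle\int u^q\,\mathbf{G}(u^{r-1}\,d\sigma)\,d\sigma$, which by a single Fubini and quasi-symmetry is bounded by $a\int u^r\,d\sigma$. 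The loop closes in one step, with no nested potentials to disentangle.
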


	\begin{proof}
		Suppose $u\in L^r(\Omega, \sigma)$, where $0<r<1$,  is a  positive supersolution. 
		Let $\gamma\ge 1$.		By H\"older's inequality with exponents $\gamma$ and $\gamma'=\frac{\gamma}{\gamma-1}$, we estimate 
		\begin{equation*}
			\begin{aligned} 
			\G \sigma (x) & = \int_{\Omega}  u^{\frac{q}{\gamma}}  u^{-\frac{q}{\gamma}} G(x, y) \, d \sigma(y)\\& \le \left[ \G(u^q 
			d \sigma(x)\right]^{\frac{1}{\gamma}}  \left[\G(u^{-\frac{q}{\gamma-1}} d \sigma)(x)\right]^{\frac{1}{\gamma'}} \\&
			\le [u(x)]^{\frac{1}{\gamma}}  \left[\G(u^{-\frac{q}{\gamma-1}} d \sigma)(x)\right]^{\frac{1}{\gamma'}} . 
			\end{aligned}
		\end{equation*}
		Let $\gamma = 1+\frac{q}{1-r}$, where $0<r \le 1-q^2$. Then $\frac{(1-q)\gamma'}{r}\ge 1$. 
		Using the preceding inequality, along with H\"older's inequality with the conjugate exponents 
		$$ \frac{(1-q)(1-r+q)}{1-r-q^2} >1 \quad  \textrm{and} \quad 
		\frac{(1-q)(1-r+q)}{rq} \ge 1,$$ 
		and Fubini's theorem, we estimate

		\begin{equation*}
			\begin{aligned} 
				\int_\Omega (\G \sigma)^{\frac{r}{1-q}} d \sigma & \le \int_{\Omega} u^{\frac{r}{(1-q)\gamma}}   
				\left[\G(u^{r-1} d \sigma)\right]^{\frac{r}{(1-q)\gamma'}} 
				 d \sigma   \\&=  \int_{\Omega} u^{\frac{r(1-r-q^2)}{(1-q)(1-r +q)}}   
				\left[ u^q \, \G(u^{r-1} d \sigma)\right]^{\frac{rq}{(1-q)(1-r+q)}} 
				 d \sigma 				  \\& 
				 \le \left[ \int_{\Omega}  u^{r} d \sigma\right]^{\frac{1-r-q^2}{(1-q)(1-r +q)}}  
				 \left[\int_{\Omega} \G(u^{r-1} d \sigma) \,  u^q \,d \sigma\right]^{\frac{rq}{(1-q)(1-r+q)}} 
				 \\
				& = 
				\left[ \int_{\Omega}  u^{r} d \sigma\right]^{\frac{1-r-q^2}{(1-q)(1-r +q)}}  
				\left[ \int_{\Omega} \G^{*}(u^q d \sigma) u^{r-1} d \sigma\right]^{\frac{rq}{(1-q)(1-r+q)}}  
				\\& 
				\le a^{\frac{rq}{(1-q)(1-r+q)}}   \left[ \int_{\Omega}  u^{r} d \sigma\right]^{\frac{1-r-q^2+rq}{(1-q)(1-r +q)}}.
			\end{aligned}
		\end{equation*}
		In the last estimate we used the inequality $ \G^{*}(u^q d \sigma) \le a \, u$. Since 
		$1-r-q^2+rq=(1-q)(1-r +q)$, this completes the proof of \eqref{est1}. 
	\end{proof} 
	
We next show that, for general non-negative kernels $G$, the $(p, r)$-weighted 
norm inequality \eqref{strong-type} with $p=\frac{r}{q}\ge 1$ yields 
the existence of a supersolution $u \in L^r(\Omega, \sigma)$ to 
\eqref{super_sol}. This is deduced from Gagliardo's lemma \cite{G} 
(see also \cite{Szeptycki}), as in the special case $r=q$ in \cite{QV2}.  
	  
It  will be convenient for us to construct a measurable function $\phi$ such that 
	\begin{equation}\label{phi}	
		0<  [\mathbf{G}(\phi \, d\sigma)]^q \le \phi <+\infty \quad d \sigma-\textnormal{a.e.},
		\end{equation} 
		for $0<q<1$. 
Clearly, 
		if $\phi$ satisfies the above estimate, then $u=\phi^{\frac{1}{q}}$ satisfies \eqref{super_sol}. 
		Moreover, $u \in L^r(\Omega, \sigma)$ if $\phi \in L^p(\Omega, \sigma)$, where $p=\frac{r}{q}\ge 1$.

    We recall that a convex cone $P\subset B$ is \textit{ strictly convex at the origin} if, for any $\phi, \psi \in P$,   
      $\alpha \phi + \beta \psi = 0$  implies $\phi = \psi = 0$, for any $\alpha, \beta > 0$ such that $\alpha + \beta = 1$.

    \begin{lemma}[Gagliardo \cite{G}]
    	\label{gag_lemma}
        Let $B$ be a Banach space, and let $P \subset B$ be a convex cone which is strictly convex at the origin and such that if $ (\phi_n) \subset P$, $\phi_{n+1} - \phi_n \in P$, and $\Vert \phi_n \Vert \le M$ for all $n = 1, 2, \dots$, then there exists $\phi \in P$ so that $\Vert \phi_n - \phi \Vert \rightarrow 0$.
        
          Let $S\colon \,  P \rightarrow P$ be a continuous mapping with the following properties:
        \begin{enumerate}
        	\item For $\phi, \psi \in P$, such that $\phi - \psi \in P$, we have $S\phi - S\psi \in P$.
        	\item If $\Vert \phi \Vert \le 1$ and $\phi \in P$, then $\Vert Su \Vert \le 1$.
        \end{enumerate}
        	Then for every $\lambda > 0$ there exists $\phi \in P$ so that $(1 + \lambda)\phi - S\phi \in P$ and $0 < \Vert \phi \Vert \le 1$. Moreover, for every $\psi \in P$ such that $0<\Vert \psi \Vert_B \le \frac{\lambda}{1+\lambda}$, $\phi$ can 
	be chosen so that $\phi =\psi + \frac{1}{1+\lambda} S \phi$. 
    \end{lemma}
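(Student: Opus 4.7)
The plan is to produce $\phi$ as the limit of a monotone, norm-bounded iteration and then invoke the sequential-completeness assumption built into $P$. Fix $\psi\in P$ with $0<\Vert\psi\Vert_B\le \frac{\lambda}{1+\lambda}$ and define $T\colon P\to P$ by $T\phi \defeq \psi + \frac{1}{1+\lambda}\, S\phi$, which is well-defined since $P$ is a convex cone. A fixed point of $T$ satisfies $(1+\lambda)\phi - S\phi = (1+\lambda)\psi \in P$ automatically, so the full conclusion reduces to producing a nonzero fixed point of $T$ lying in the unit ball of $P$.

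I would set $\phi_0 \defeq 0 \in P$ and $\phi_{n+1}\defeq T\phi_n$, and prove by induction the two statements (a) $\phi_{n+1}-\phi_n\in P$, and (b) $\Vert\phi_n\Vert\le 1$. The base case of (a) is $\phi_1-\phi_0 = \psi + \frac{1}{1+\lambda}S(0)\in P$; the inductive step uses monotonicity property (1) of $S$, giving $S\phi_{n+1}-S\phi_n\in P$ and hence $\phi_{n+2}-\phi_{n+1}=\frac{1}{1+\lambda}(S\phi_{n+1}-S\phi_n)\in P$. For (b), if $\Vert\phi_n\Vert\le 1$ then property (2) yields $\Vert S\phi_n\Vert\le 1$, and
$$\Vert\phi_{n+1}\Vert \le \Vert\psi\Vert + \tfrac{1}{1+\lambda}\Vert S\phi_n\Vert \le \tfrac{\lambda}{1+\lambda}+\tfrac{1}{1+\lambda}=1.$$
The calibration $\Vert\psi\Vert\le \frac{\lambda}{1+\lambda}$ is exactly what closes this estimate.

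With (a) and (b) in hand, the standing hypothesis on $P$ gives $\phi\in P$ with $\Vert\phi_n-\phi\Vert\to 0$; continuity of $S$ yields $S\phi_n\to S\phi$, and passing to the limit in $\phi_{n+1}=\psi+\frac{1}{1+\lambda}S\phi_n$ produces the identity $\phi=\psi+\tfrac{1}{1+\lambda}S\phi$, while $\Vert\phi\Vert\le 1$ by continuity of the norm. To exclude $\phi=0$ I would invoke strict convexity at the origin: if $\phi=0$ then $0=\psi+\frac{1}{1+\lambda}S\phi$ rewrites as $\tfrac12(2\psi)+\tfrac12\bigl(\tfrac{2}{1+\lambda}S\phi\bigr)=0$ with both summands in $P$, forcing $\psi=0$ and contradicting $\Vert\psi\Vert>0$.

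The main obstacle is a bookkeeping one: the iterates $\phi_n$ remain in the unit ball only because of the precise coupling between the base-point norm $\Vert\psi\Vert\le\tfrac{\lambda}{1+\lambda}$ and the damping factor $\tfrac{1}{1+\lambda}$ in front of $S$, so the normalization in the hypotheses is tight. Strict convexity at the origin is then the exact structural property required to rule out the trivial fixed point once the monotone limit has been produced.
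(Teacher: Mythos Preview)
Your argument is correct and is essentially the classical iteration proof of Gagliardo's lemma. Note, however, that the paper does not supply its own proof of this statement: the lemma is quoted with attribution to Gagliardo \cite{G} and used as a black box in the proof of Lemma~\ref{phi_soln}, so there is no in-paper proof to compare against. Your monotone-iteration approach (start at $\phi_0=0$, show $\phi_{n+1}-\phi_n\in P$ via property~(1), keep $\Vert\phi_n\Vert\le 1$ via property~(2) and the calibration $\Vert\psi\Vert\le\frac{\lambda}{1+\lambda}$, pass to the limit using the completeness hypothesis on $P$ and the continuity of $S$, then rule out $\phi=0$ by strict convexity at the origin) is exactly the standard route in \cite{G} and \cite{Szeptycki}.
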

    
    We will apply this lemma to $B= L^p(\sigma)$, $p\ge 1$, and the cone of non-negative functions $P$ 
    in $B$. In this case obviously one can ensure that $\phi>0$ $d \sigma$-a.e.  
        
	\begin{lemma}
		\label{phi_soln}
		Let $(\Omega, \sigma)$ be a sigma-finite measure space, and let $G$ be a non-negative kernel on $\Omega\times \Omega$. Let $0<r<+\infty$  and $0<q<1$. 
		Suppose  \eqref{strong-type} holds for $p=\frac{r}{q}\ge 1$ 
		with an embedding constant $C=\varkappa>0$. 
		Then, for every $\lambda > 0$, there is a positive  $\phi \in L^p(\sigma)$ satisfying \eqref{phi} so that 
		$$\Vert \phi \Vert_{L^p(\sigma)} \le (1+\lambda)^{\frac{1}{1-q}} \varkappa^{\frac{q}{1-q}}.$$
	\end{lemma}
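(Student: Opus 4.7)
The plan is to apply Gagliardo's lemma (Lemma~\ref{gag_lemma}) with $B = L^p(\Omega, \sigma)$, where $p = r/q \ge 1$, with $P$ the convex cone of nonnegative functions in $B$, and with the nonlinear operator
\begin{equation*}
S\phi \defeq \varkappa^{-q}\,[\mathbf{G}(\phi\,d\sigma)]^q, \qquad \phi \in P.
\end{equation*}
The cone $P$ is strictly convex at the origin (a nontrivial convex combination of two nonnegative functions vanishes only when both vanish), and the required sequential completeness for monotone sequences bounded in $L^p$ is just the monotone convergence theorem together with dominated convergence. The map $S$ sends $P$ into $P$ and is order-preserving, since $\phi \mapsto \mathbf{G}(\phi\,d\sigma)$ is linear and positivity-preserving and $t \mapsto t^q$ is monotone on $[0,\infty)$.

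For the unit-ball condition, the $(p,r)$-inequality with $p = r/q$ (so that $qp = r$) yields
\begin{equation*}
\|S\phi\|_{L^p(\sigma)}^{p} = \varkappa^{-qp}\int_\Omega [\mathbf{G}(\phi\,d\sigma)]^{r}\,d\sigma \le \varkappa^{-qp+r}\|\phi\|_{L^p(\sigma)}^{r} = \|\phi\|_{L^p(\sigma)}^{r},
\end{equation*}
so $\|S\phi\|_p \le \|\phi\|_p^{q} \le 1$ whenever $\|\phi\|_p \le 1$. Continuity of $S$ on $P$ will follow from the elementary bound $|a^q - b^q| \le |a-b|^q$ valid for $a, b \ge 0$ and $0<q\le 1$, combined with $|\mathbf{G}(\phi_n\,d\sigma) - \mathbf{G}(\phi\,d\sigma)| \le \mathbf{G}(|\phi_n-\phi|\,d\sigma)$; the same $(p,r)$-inequality then gives $\|S\phi_n - S\phi\|_{L^p(\sigma)}^{p} \le \|\phi_n - \phi\|_{L^p(\sigma)}^{r}$.

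Applying Gagliardo's lemma with the given $\lambda>0$ produces $\phi_0 \in P$ with $0 < \|\phi_0\|_{p} \le 1$ and $(1+\lambda)\phi_0 - S\phi_0 \in P$, i.e., $S\phi_0 \le (1+\lambda)\phi_0$ $d\sigma$-a.e. The ``moreover'' clause of the lemma allows us to write $\phi_0 = \psi_0 + \tfrac{1}{1+\lambda} S\phi_0$ for a prescribed $\psi_0 \in P$ with $0 < \|\psi_0\|_p \le \lambda/(1+\lambda)$; since $(\Omega,\sigma)$ is $\sigma$-finite, we may choose such $\psi_0$ strictly positive $d\sigma$-a.e. (e.g., as a suitable weighted sum of indicators of an exhausting sequence), so $\phi_0 \ge \psi_0 > 0$ $d\sigma$-a.e.

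Finally, rescale by setting $\phi = \alpha\,\phi_0$ with $\alpha = (1+\lambda)^{1/(1-q)}\varkappa^{q/(1-q)}$. A short computation gives
\begin{equation*}
[\mathbf{G}(\phi\,d\sigma)]^q = \alpha^q \varkappa^q\, S\phi_0 \le \alpha^q \varkappa^q (1+\lambda)\,\phi_0 = \alpha^{q-1}\varkappa^q(1+\lambda)\,\phi = \phi,
\end{equation*}
where the choice of $\alpha$ is exactly what makes $\alpha^{q-1}\varkappa^q(1+\lambda) = 1$; the norm bound $\|\phi\|_p = \alpha\|\phi_0\|_p \le \alpha$ is the claim, and finiteness $\phi < +\infty$ $d\sigma$-a.e. is automatic from $\phi \in L^p(\sigma)$. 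The main obstacle I anticipate is simply the bookkeeping in this last rescaling step: Gagliardo's lemma delivers $S\phi_0 \le (1+\lambda)\phi_0$ rather than the fixed-point-type estimate we need, so one must identify the precise $\alpha$ that absorbs the parasitic factors $(1+\lambda)$ and $\varkappa^q$; ensuring strict positivity of $\phi$ likewise requires invoking the ``moreover'' clause rather than something intrinsic to the abstract setup.
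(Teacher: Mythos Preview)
Your proof is correct and follows essentially the same approach as the paper: apply Gagliardo's lemma in $L^p(\sigma)$ to the operator $S\phi=\varkappa^{-q}[\mathbf{G}(\phi\,d\sigma)]^q$, then rescale. You are in fact more explicit than the paper in verifying continuity of $S$ and in invoking the ``moreover'' clause together with $\sigma$-finiteness to secure strict positivity; your scaling constant $\alpha=[(1+\lambda)\varkappa^q]^{1/(1-q)}$ is the correct one (the paper's displayed $c$ has a sign slip, though its stated norm bound agrees with yours).
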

	\begin{proof}
		The supersolution $\phi$ is constructed using Lemma \ref{gag_lemma}.
		Define $S\colon \,  L^p(\sigma) \rightarrow L^p(\sigma)$  by 
		\[S \phi \defeq \Big[\frac{1}{\varkappa^q} \, \mathbf{G}(\phi \, d\sigma)\Big]^q,\]
		for all $\phi \in L^p(\sigma)$, $\phi \ge 0$.
		Inequality \eqref{strong-type} gives that $S$ is a bounded continuous operator.
		In fact, by \eqref{strong-type} we see  that if $\Vert \phi \Vert_{ L^p(\sigma)} \le 1$, then
		\begin{align*}
			\Vert S(\phi) \Vert^p_{L^p(\sigma)}
				&= \frac{1}{\varkappa^r} \int_\Omega [\mathbf{G}(\phi \sigma)]^r \, d\sigma \\
				&= \frac{1}{\varkappa^r} \varkappa^r \left(\int_\Omega \phi^p \, d\sigma\right)^q\le 1.
		\end{align*}
		Therefore, by  Lemma~\ref{gag_lemma}, there exists $\phi \in L^p(\sigma)$ such that 
			\[ (1+\lambda) \phi \ge \frac{1}{\varkappa^q} [\mathbf{G}(\phi \sigma)]^q, \]
			$\Vert \phi \Vert_{L^p(\sigma)} \le 1$, and $\phi > 0$ $d \sigma$-a.e.
		Setting $\phi_0 = c \,  \phi$, where 
			\[ c= \left[ \frac{1}{(1 + \lambda) \varkappa^q} \right]^{\frac{1}{1-q}}, \]
			we deduce that $\phi > 0$ $d \sigma$-a.e., and 
			\[ \phi_0 \ge \mathbf{G}(\phi_0 \sigma)^q, \quad \Vert \phi_0 \Vert_{L^p(\sigma)} \le (1 + \lambda)^{\frac{1}{1-q}} \varkappa^{\frac{q}{1-q}}.\]
	\end{proof}

Remark \ref{rm2}  follows immediately from Lemma \ref{phi_soln}.

\begin{remark}\label{rm4} {\rm For $p=\frac{r}{q}$, a counterexample 
in \cite{QV2} demonstrates that, without the WMP, the existence of a supersolution $u\in L^r(\Omega, \sigma)$ to \eqref{super_sol} in the case $r=q$ does not imply the
$(p, r)$-weighted norm inequality \eqref{strong-type}, even for positive symmetric kernels $G$. A slight modification of that counterexample shows that the same is true in the case $r>q$ as well. 
}
\end{remark}

	\section{A counterexample in the end-point case $p=1$}
	\label{sec2}
	
	In the case $p=1$, $0<q<1$,  the $(1, q)$-weighted norm inequality  \eqref{strong-type} with $r=q$ 
follows from a similar inequality for the space of measures $\mathcal{M}^{+}(\Omega)$ in place of 
$L^1(\Omega, \sigma)$, 
\begin{equation}\label{strong-meas}
\Vert \mathbf{G} \nu \Vert_{L^q(\Omega, \sigma)} \le C \, ||\nu||, \quad \forall \nu \in \mathcal{M}^{+}(\Omega),
\end{equation}
where $||\nu||=\nu(\Omega)$. This inequality was shown in \cite{QV2} to be equivalent to the existence of a positive 
supersolution $u \in L^q(\Omega, \sigma)$ to \eqref{super_sol} for quasi-symmetric kernels $G$ satisfying 
the WMP. In this case, \eqref{strong-meas} is equivalent to \eqref{strong-type}  with $r=q$ and $p=1$ in view of Lemma \ref{phi_soln}. 

 However, a characterization 
	of  \eqref{strong-meas}, or \eqref{strong-type} with $r=q$ and $p=1$, in terms of the energy estimate \eqref{r-q} with $r=q$ is not available, contrary to the case $r>q$:  the condition  
	\begin{equation}\label{q-energy}
	\int_\Omega (\mathbf{G} \sigma)^{\frac{q}{1-q}} d \sigma<+\infty
	\end{equation} 
	is not sufficient for  \eqref{strong-meas}.  
	
	On the other hand, it is not difficult to see that  \eqref{strong-meas} holds for all $\nu \in \mathcal{M}^{+}(\Omega)$ 
	if and only if it holds for all finite linear combinations of point masses, $\nu =\sum_{j=1}^n a_j \, \delta_{x_j}$, $a_j>0$. It had been conjectured that, for $0<q<1$, condition  \eqref{q-energy} combined with \eqref{strong-meas} 
	for single point masses $\nu=\delta_{x}$, i.e., 
	\begin{equation}\label{point-mass}
	\int_\Omega G (x,y)^q \, d \sigma(y) \le C <+\infty, \quad \forall x \in \Omega,
	\end{equation} 
was not only necessary, but also sufficient 
	for \eqref{strong-meas}. (Notice that in the case $q\ge 1$  \eqref{point-mass} is obviously necessary and sufficient 
	for \eqref{strong-meas}; see \cite{QV2}.) 
	
	In this section, we give a counterexample to this conjecture 
	for Riesz potentials on $\R^n$, 
	$$
	\mathbf{I}_{2 \alpha} \nu(x) = \int_{\R^n} \frac{d \nu(y)}{|x-y|^{n-2 \alpha}}, \quad x \in \R^n,
	$$
	where $\nu \in \mathcal{M}^{+}(\R^n)$, and $0<2\alpha<n$. Clearly, Riesz kernels 
	$|x-y|^{2 \alpha-n}$ are symmetric, and satisfy the WMP.

	Suppose  $0<q<1$, $n \ge 1$, and  $0< 2 \alpha < n$. We construct 
	$\sigma \in \M$ such that 
	\begin{equation}\label{A}
	  \E(\sigma)=\E_{\alpha, q} (\sigma)\defeq \int_{\R^n} \Big (\mathbf{I}_{2 \alpha} \sigma\Big)^{\frac{q}{1-q}} d \sigma < +\infty,
	\end{equation}
	and 
	\begin{equation}\label{B}
	\mathcal{K} (\sigma)=\mathcal{K}_{\alpha, q} (\sigma)\defeq \sup_{x \in \R^n} \int_{\R^n} \frac{d \sigma(y)}{|x-y|^{(n-2\alpha)q}}< +\infty,
	\end{equation}
	but 
	\begin{equation}\label{C}
	\kappa(\sigma) = \kappa(\sigma)_{\alpha, q}\defeq \sup \left\{ \frac{|| \I_{2 \alpha} \nu||_{L^q(\sigma)}}{||\nu||_{\M}} \colon \, \, \nu \in \M, \, \,  \nu \not=0\right\} = +\infty.
	\end{equation}

	In other words, we need to construct a measure $\sigma$ such that $ \mathcal{E}(\sigma)<+\infty$ (in the special case $q=\frac{1}{2}$  this means that 
	$\sigma$ has finite energy), and 
	\eqref{C} holds for all $\delta$-functions $\nu = \delta_x$ ($x \in \R^n$), but \eqref{C} fails for a linear combination of $\delta$-functions 
	\begin{equation}\label{delta}
	\nu =\sum_{j=1}^{\infty} a_j \, \delta_{x_j}, \quad \textrm{where} \quad \sum_{j=1}^{\infty} a_j < +\infty, \quad a_j>0.
	\end{equation}
	 We will use a modification of the example considered in \cite{CV2} for other purposes.

	We will need the following lemma and its corollary in the radially symmetric case (see \cite{CV2}). 
	\begin{lemma}\label{lemvar} Let $0<q<1$ and $0<2 \alpha < n$. If   $d \sigma = \sigma(|x|) \, dx$ is radially symmetric,  
	then $\kappa(\sigma)<+\infty$ if and only if  $\mathcal{K}(\sigma)< +\infty$. Moreover,  there exists  a constant $c=c(q, \alpha, n)>0$ 
	such that  $\kappa(\sigma)$  satisfies 
	\begin{equation}\label{const-a}
	\mathcal{K}(\sigma) \le \kappa(\sigma)^q \le c \, \mathcal{K}(\sigma),  
	\end{equation}
	where in the this case 
	\begin{equation}\label{rad-const}
	\mathcal{K}(\sigma) =  \int_{\R^n} \frac{d \sigma(y)}{|y|^{(n-2\alpha)q}}. 
	\end{equation}
	\end{lemma}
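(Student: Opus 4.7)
The plan is to prove both halves of $\mathcal{K}(\sigma)\le\kappa(\sigma)^{q}\le c\,\mathcal{K}(\sigma)$; the equivalence of finiteness follows at once. The lower bound is immediate: testing the definition of $\kappa(\sigma)$ with $\nu=\delta_{0}$ gives $\kappa(\sigma)^{q}\ge \|\mathbf{I}_{2\alpha}\delta_{0}\|_{L^{q}(\sigma)}^{q}=\int_{\R^{n}}|y|^{-(n-2\alpha)q}\,d\sigma(y)=\mathcal{K}(\sigma)$, so the substantive direction is $\kappa(\sigma)^{q}\le c\,\mathcal{K}(\sigma)$.

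The first step toward the upper bound is to reduce to radial test measures $\nu$. The functional $F(\nu):=\int(\mathbf{I}_{2\alpha}\nu)^{q}\,d\sigma$ is concave in $\nu$ because $\mathbf{I}_{2\alpha}$ is linear and $t\mapsto t^{q}$ is concave for $0<q<1$, and it is $SO(n)$-invariant because $\sigma$ is rotation-invariant. Averaging $\nu$ over $SO(n)$ against Haar measure produces a radial measure $\nu^{\sharp}$ with $\|\nu^{\sharp}\|=\|\nu\|$ and $F(\nu^{\sharp})\ge F(\nu)$ by Jensen's inequality, so it suffices to prove the inequality for radial $\nu$.

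For radial $\nu$, I would disintegrate $\nu=\int_{0}^{\infty}\omega_{s}\,dm(s)$, where $\omega_{s}$ is the uniform probability measure on the sphere $\{|y|=s\}$ and $m$ is the push-forward of $\nu$ under $y\mapsto|y|$, so that $\|m\|=\|\nu\|$. By Fubini, $\mathbf{I}_{2\alpha}\nu(x)=\int_{0}^{\infty}\mathbf{I}_{2\alpha}\omega_{s}(x)\,dm(s)$, and the key pointwise bound, derived from the one-dimensional polar integral for the spherical average of the Riesz kernel, is
\[ \mathbf{I}_{2\alpha}\omega_{s}(x)\le c(\alpha,n)\,\max(|x|,s)^{2\alpha-n}. \]
Since $n>2\alpha$ the function $s\mapsto\max(|x|,s)^{2\alpha-n}$ is bounded above by $|x|^{2\alpha-n}$ uniformly in $s$, so integrating against $dm$ gives $\mathbf{I}_{2\alpha}\nu(x)\le c\,|x|^{2\alpha-n}\|\nu\|$. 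Raising to the $q$-th power and integrating against $\sigma$ yields $\int(\mathbf{I}_{2\alpha}\nu)^{q}\,d\sigma\le c^{q}\|\nu\|^{q}\int|y|^{-(n-2\alpha)q}\,d\sigma=c^{q}\|\nu\|^{q}\,\mathcal{K}(\sigma)$, which is precisely the upper bound in \eqref{const-a}.

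The main obstacle is the spherical-potential estimate in the near-diagonal regime $|x|\approx s$. Away from the diagonal it is elementary, since $|x-y|\gtrsim\max(|x|,s)$ uniformly for $y$ on the sphere of radius $s$. On the diagonal $|x|=s$ the polar integral converges pointwise only when $\alpha>\tfrac{1}{2}$; for $\alpha\le\tfrac{1}{2}$ the spherical potential $\mathbf{I}_{2\alpha}\omega_{s}(x)$ is infinite on the set $\{|x|=s\}$, which however has $\sigma$-measure zero because $\sigma$ is absolutely continuous. The pointwise estimate must therefore be interpreted in an almost-everywhere (and ultimately integrated) sense, and this is where the absolute continuity hypothesis $d\sigma=\sigma(|x|)\,dx$ enters essentially in handling the full range $0<2\alpha<n$.
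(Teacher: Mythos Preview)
The paper does not prove this lemma in the text; it is quoted from \cite{CV2}, so there is no in-paper argument to compare against. I will therefore evaluate your proposal on its own merits.

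Your lower bound is fine (and in fact $\mathcal{K}(\sigma)\le\kappa(\sigma)^{q}$ holds for \emph{any} $\sigma$, radial or not: just test with $\nu=\delta_{x}$ for every $x$ and take the supremum). Your reduction to radial $\nu$ via Jensen and rotational averaging is also correct and is a natural first move.

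The genuine gap is in the spherical-potential estimate for $0<\alpha\le\tfrac12$. You claim
\[
\mathbf{I}_{2\alpha}\omega_{s}(x)\ \le\ c(\alpha,n)\,\max(|x|,s)^{2\alpha-n},
\]
and then say that the failure at $|x|=s$ is harmless because $\{|x|=s\}$ is a $\sigma$-null set. But the failure is not confined to a null set: for $\alpha<\tfrac12$ one has the two-sided asymptotic
\[
\mathbf{I}_{2\alpha}\omega_{s}(x)\ \approx\ s^{2\alpha-n}\Bigl(\tfrac{|\,|x|-s\,|}{s}\Bigr)^{2\alpha-1}
\qquad\text{for }|x|\text{ close to }s,
\]
which blows up on an \emph{open} annulus around $\{|x|=s\}$. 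Consequently the derived bound $\mathbf{I}_{2\alpha}\nu(x)\le c\,|x|^{2\alpha-n}\Vert\nu\Vert$ is simply false for radial $\nu$ in this range (take $\nu=\omega_{1}$ and $|x|$ near $1$), and no ``a.e.'' reinterpretation rescues it. Your argument as written is valid only for $\alpha>\tfrac12$.

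To cover the full range $0<2\alpha<n$ one has to control the near-diagonal contribution quantitatively, e.g.\ by integrating the blow-up $\bigl(|\,|x|-s\,|/s\bigr)^{(2\alpha-1)q}$ against the radial density of $\sigma$ (note $(1-2\alpha)q<1$, so this is locally integrable), or by passing through the Lorentz-space characterization mentioned in the remark following the lemma and proved in \cite{QV1}, \cite{QV2}. Either route requires an additional estimate that your proposal does not supply. You also do not address the separate assertion \eqref{rad-const} that the supremum defining $\mathcal{K}(\sigma)$ is attained at $x=0$; this is not automatic (for $(n-2\alpha)q>n-2$ the spherical average $K_{\beta}(r,s)$ has a local \emph{minimum} at $r=0$) and needs its own justification.
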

	
	\begin{remark}{\rm For radially symmetric $\sigma$, condition $\mathcal{K}(\sigma)< +\infty$ 
	is equivalent to $\sigma\in L^{\frac{1}{1-q},1}(\R^n, \sigma)$, 
	which is necessary and sufficient for  \eqref{strong-meas} in this case; see \cite{QV1}, \cite{QV2}. 
	Here $L^{s,1}(\R^n, \sigma)$ denotes the corresponding Lorentz space with respect to the measure 
	$\sigma$.}  
	\end{remark}
	
	\begin{cor}\label{cor-c} Let $\sigma_{R, \gamma} = \chi_{B(0, R)} |x|^{-\gamma}$, where 
	$0\le \gamma<n-q(n-2 \alpha)$ and $R>0$. Then 
	\begin{equation}\label{rad-K}
	\mathcal{K}(\sigma) =  \frac{ \omega_n \, R^{n-\gamma - q(n-2 \alpha)}} {n-\gamma - q(n-2 \alpha)},
	\end{equation}
	and 
	\begin{equation}\label{rad-kappa}
	\frac{\omega_n}{n-\gamma - q(n-2 \alpha)}     \le \frac{\kappa(\sigma_{R, \gamma})^q} {R^{{n-\gamma-q(n-2 \alpha)}}}
	 \le \frac{c}{n-\gamma - q(n-2 \alpha)},  
	\end{equation}
	where $c=c(q, \alpha, n)$, and $\omega_n= |S^{n-1}|$ is the surface area of the unit sphere. 
	\end{cor}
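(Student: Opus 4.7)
The plan is to exploit the radial symmetry of $\sigma_{R,\gamma}$ so that Lemma~\ref{lemvar} applies directly, reducing everything to a single explicit integral.

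First, I would verify that $\sigma_{R,\gamma}$ is indeed radially symmetric (it is, being a multiple of $|x|^{-\gamma}$ on a ball centered at the origin), so formula \eqref{rad-const} yields
\[
\mathcal{K}(\sigma_{R,\gamma}) = \int_{B(0,R)} |y|^{-(n-2\alpha)q - \gamma}\, dy.
\]
Next, I would pass to spherical coordinates, writing $|y| = r$ and $dy = r^{n-1}\,dr\,dS$. Integrating out the angular variable produces the factor $\omega_n = |S^{n-1}|$, leaving
\[
\mathcal{K}(\sigma_{R,\gamma}) = \omega_n \int_0^R r^{\,n-1 - \gamma - q(n-2\alpha)}\, dr.
\]
The exponent satisfies $n-1 - \gamma - q(n-2\alpha) > -1$ precisely because of the standing hypothesis $\gamma < n - q(n-2\alpha)$, so the integral converges at the origin and equals
\[
\frac{R^{n-\gamma-q(n-2\alpha)}}{n-\gamma-q(n-2\alpha)}.
\]
This gives formula \eqref{rad-K}.

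Finally, I would plug this explicit value of $\mathcal{K}(\sigma_{R,\gamma})$ into the two-sided estimate \eqref{const-a} from Lemma~\ref{lemvar}, namely $\mathcal{K}(\sigma) \le \kappa(\sigma)^q \le c\,\mathcal{K}(\sigma)$ with $c = c(q,\alpha,n)$, and divide through by $R^{\,n-\gamma-q(n-2\alpha)}$. This immediately produces the two-sided bound \eqref{rad-kappa}. There is no real obstacle here: the only delicate point is checking that the convergence condition at the origin corresponds exactly to the stated admissible range of $\gamma$, and that the upper constant $c$ inherited from Lemma~\ref{lemvar} depends only on $q,\alpha,n$ (not on $R$ or $\gamma$), which is built into that lemma's statement.
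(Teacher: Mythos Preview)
Your proposal is correct and matches the paper's approach: the paper gives no separate proof of this corollary, treating it as an immediate consequence of Lemma~\ref{lemvar}, and your argument---applying \eqref{rad-const} to the radially symmetric measure $\sigma_{R,\gamma}$, evaluating the resulting integral in spherical coordinates, and then inserting the answer into \eqref{const-a}---is precisely the intended one-line computation.
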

	
	Let 
	\begin{equation}\label{sigma}
	\sigma = \sum_{k=1}^\infty c_k \sigma_k, 
	\end{equation}
	where 
	\begin{equation}\label{r-k}
	\sigma_k = \sigma_{R_k, \gamma_k} (x+x_k), \quad R_k=|x_k|=k, \quad \gamma_k = n-q(n-2 \alpha) - \epsilon_k,
	\end{equation}
	 and the positive scalars $c_k$, 
	$\epsilon_k$ 
	are picked so that $\sum_{k=1}^\infty c_k < \infty$, $\epsilon_k \to 0$, and 
	$0<\gamma_k <n$.  Notice that $\gamma_k \to n-q(n-2 \alpha)$ as $k \to \infty$, which  
	is a critical exponent for the inequality \eqref{sobolev} (with $\sigma_k$ in place of $\sigma$) discussed 
	below.

	More precisely, for $0<q<1$ and $0<\delta<+\infty$, we set 
	\begin{equation}\label{choice}
	 a_k = \frac{1}{k (\log (k+1))^{\frac{1}{q}}},  \quad c_k=\frac{1}{k^{2-q+\delta}},  \quad \epsilon_k = \frac{1}{k^{1+\delta}}, \quad k=1, 2, \ldots , 
	\end{equation}
	so that 
	\begin{equation}\label{sums} 
	\sum_{k=1}^{+\infty} a_k<+\infty, \quad \sup_{k\ge 1} \frac{c_k}{\epsilon_k} < +\infty,  \quad \sum_{k=1}^{+\infty} \frac{c_k}{\epsilon_k^{1-q}}<+\infty,  \quad \textrm{but} \quad \sum_{k=1}^{+\infty} 
	\frac{c_k \, a_k^q}{\epsilon_k}=+\infty. 
	\end{equation}

	We first verify condition \eqref{A}. Notice that 
	 \begin{equation}\label{const}
	c_1 \, A \le [\E_{\alpha, q}(\sigma)]^{1-q} \le c_2 \, A, 
	 \end{equation}
	where $A$ is the least constant in the inequality (see \cite{COV06}; \cite{CV1}, Lemma 3.3) 
	\begin{equation}\label{sobolev}
	\int_{\R^n} | \I_{\alpha} f |^{1+q} \, d \sigma  \le A \, || f ||^{1+q}_{L^2(dx)}, \quad \text{for all} \, \, f \in L^2(\R^n, dx),  
	 \end{equation}
	 or, equivalently,  
	 \begin{equation}\label{sobolev2}
	\int_{\R^n} | \I_{2 \alpha} (g d \sigma) |^{1+q} \, d \sigma  \le A^{2} \, || g||^{1+q}_{L^{\frac{1+q}{q}}(d \sigma)}, \quad \text{for all} \, \, g \in L^2(\R^n, \sigma),   
	 \end{equation}
	 where the constants of equivalence $c_1$, $c_2$ in \eqref{const}  depend 
	only on $\alpha$, $q$, and $n$.  
	
	Consequently, $[\E_{\alpha, q}(\sigma)]^{1-q}$ is equivalent to a \textit{norm} on a subset of $\M$, so that 
	\begin{equation}\label{norm}
	\Big[\E_{\alpha, q}\Big(\sum_k \sigma_k\Big)\Big]^{1-q}\le c \, \sum_k \Big[\E_{\alpha, q}(\sigma_k)\Big]^{1-q}, 
	 \end{equation}
	where $c=c(\alpha, q, n)$ is a positive constant which depends  only on $\alpha$, $q$, and $n$. 
	
	We claim that, 
	\begin{equation}\label{norm-est}
	\E_{\alpha, q}(\sigma_k) \le  \frac{C \, {R_k}^{\frac{\epsilon_k}{1-q}}}{\epsilon_k}, 
	\quad k=1, 2, \ldots , 
	 \end{equation}
	 where $C=C(\alpha, q, n)$. 
	 
	 Indeed, by the semigroup property of Riesz kernels, 
	 \begin{equation*}
	 \begin{aligned}
	 \I_{2 \alpha} \sigma_k (x) & = c(\alpha, n) \int_{B(0, R_k)} \frac{d t}{|x-t|^{n- 2 \alpha} 
	 |t+x_k|^{\gamma_k} }\\ & \le c(\alpha, n) \int_{\R^n} \frac{d t}{|x-t|^{n- 2 \alpha} 
	 |t+x_k|^{\gamma_k} }
	 =  c \, |x+x_k|^{2 \alpha-\gamma_k}, 
	 \end{aligned}
	  \end{equation*}
	 where $c = c(n, 2 \alpha+n -\gamma_k)$ remains bounded by a constant $C(\alpha, q, n)$ as $k \to +\infty$, since $\lim_{k\to +\infty}  (2 \alpha+n -\gamma_k) = 2 \alpha +q(n-2 \alpha)<n$. 
	 
	 Notice that $(\gamma_k - 2 \alpha)\frac{q}{1-q} + \gamma_k = n -\frac{\epsilon_k}{1-q}$. 
	 Hence, by the preceding estimate, 
	 \begin{equation*}
	 \begin{aligned}
	\E_{\alpha, q}(\sigma_k) & = \int_{\R^n} \Big(\I_{2 \alpha} \sigma_k\Big)^{\frac{q}{1-q}} d \sigma_k 
	\\ & \le c^{\frac{q}{1-q}} \int_{|x+x_k|<R_k} \frac{dx}{|x+x_k|^{n-\frac{\epsilon_k}{1-q}}} \\& = c^{\frac{q}{1-q}}  \omega_n \int_{0}^{R_k} r^{\frac{\epsilon_k}{1-q}-1} dr
	\\& \le   \frac{C(\alpha, q, n) \, {R_k}^{\frac{\epsilon_k}{1-q}}}{\epsilon_k},
	 \end{aligned}
	  \end{equation*}
	which proves \eqref{norm-est}.

	It follows from \eqref{norm} and the preceding estimate that, for $\sigma$ defined by \eqref{sigma}, 
	\begin{equation}\label{sum} 
	 \begin{aligned}
	\Big[\E_{\alpha, q}(\sigma)\Big]^{1-q} 
	 & \le c(\alpha, q, n) \,  \sum_k c_k \Big[\E_{\alpha, q}(\sigma_k)\Big]^{1-q}\\
	& \le c(\alpha, q, n)  \, C(\alpha, q, n)^{1-q}  \, \sum_k \frac{c_k \, R_k^{\epsilon_k}}{\epsilon_k^{1-q}}<+\infty, 
	\end{aligned}
	\end{equation}
	by \eqref{sums}, since obviously $\sup_{k \ge 1} R_{k}^{\epsilon_k} < +\infty$ by \eqref{choice}. This proves \eqref{A}.

	 To prove \eqref{B}, we will need the following lemma.

	 \begin{lemma}\label{lemma} 
	 Let  $R>0$, $0< \beta<n$, and  $0<\epsilon<n-\beta$. For $\gamma = n-\beta-\epsilon>0$, we have 
	\begin{equation}\label{mathcal-Kb} 
	\phi_{R, \gamma} (x) \defeq \int_{|t|<R}  \frac{dt }{|x-t|^{\beta} |t|^{\gamma}} \approx 
	\left\{ \begin{array}{ll}
	 \frac{R^{\epsilon}-|x|^{\epsilon}}{\epsilon} & \quad \textrm{ if ~~}   |x| \le  \frac{R}{2},\\ R^{\epsilon} \Big(\frac{R}{|x|}\Big)^{\beta} & \quad \textrm{ if ~~}  |x| >  
	\frac{R}{2},
	\end{array} \right.
	 \end{equation}
	 where the constants of equivalence depend only on  $\beta$ and $n$. 
	 \end{lemma}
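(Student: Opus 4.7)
The plan is to estimate $\phi_{R,\gamma}(x)$ by decomposing $B(0,R)$ into regions dictated by the two singularities of the integrand: one at $t=0$ from $|t|^{-\gamma}$ and one at $t=x$ from $|x-t|^{-\beta}$. On each piece one of the two factors is essentially constant, so the remaining integral is a plain power integral in polar coordinates. The delicate point is to keep all constants uniform in $\epsilon\in(0,n-\beta)$, so that in the regime $|x|\le R/2$ the estimate collapses onto the differentiated expression $(R^\epsilon-|x|^\epsilon)/\epsilon$, not merely onto its pieces $R^\epsilon$ and $|x|^\epsilon$ separately.

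In the case $|x|\le R/2$, I would split $B(0,R)$ into three sets. On $\{|t|<|x|/2\}$ one has $|x-t|\approx|x|$, so the contribution is $\approx|x|^{-\beta}\cdot|x|^{n-\gamma}/(n-\gamma)=|x|^\epsilon/(\beta+\epsilon)$. On $\{|t-x|<|x|/2\}$ one has $|t|\approx|x|$, and symmetrically the contribution is $\approx|x|^\epsilon/(n-\beta)$. The complementary region $\{|x|/2\le|t|<R,\,|t-x|\ge|x|/2\}$ is further split at $|t|=2|x|$: on $\{|t|>2|x|\}$ one has $|x-t|\approx|t|$, giving
\[
\omega_n\int_{2|x|}^R r^{\epsilon-1}\,dr=\omega_n\,\frac{R^\epsilon-(2|x|)^\epsilon}{\epsilon},
\]
while the remaining intermediate shell contributes $\approx|x|^\epsilon$ by a direct estimate. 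Summing yields $\phi_{R,\gamma}(x)\approx (R^\epsilon-(2|x|)^\epsilon)/\epsilon+|x|^\epsilon$.

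In the case $|x|>R/2$, if $|x|>2R$ then $|x-t|\approx|x|$ throughout $B(0,R)$, so
\[
\phi_{R,\gamma}(x)\approx |x|^{-\beta}\int_{|t|<R}|t|^{-\gamma}\,dt=\frac{\omega_n}{\beta+\epsilon}\,R^{\beta+\epsilon}\,|x|^{-\beta}\approx R^\epsilon\Bigl(\frac{R}{|x|}\Bigr)^\beta.
\]
If $R/2<|x|\le 2R$, then $|x|\approx R$, and splitting $B(0,R)$ at $|t|=|x|/2$ gives the upper bound $\lesssim R^\epsilon$ by bounding $|t|^{-\gamma}\lesssim R^{-\gamma}$ on the outer piece and $|x-t|^{-\beta}\lesssim R^{-\beta}$ on the inner piece; restricting to $\{|t|<R/4\}\subset B(0,R)$, on which $|x-t|\approx R$, gives a matching lower bound. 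Since $R/|x|\in[1/2,2]$, the result matches $R^\epsilon(R/|x|)^\beta$ with constants depending only on $\beta$ and $n$.

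The main obstacle is converting the Case 1 expression $(R^\epsilon-(2|x|)^\epsilon)/\epsilon+|x|^\epsilon$ into the claimed $(R^\epsilon-|x|^\epsilon)/\epsilon$ uniformly in $\epsilon\in(0,n-\beta)$. I would use the algebraic identity
\[
\frac{R^\epsilon-|x|^\epsilon}{\epsilon}=\frac{R^\epsilon-(2|x|)^\epsilon}{\epsilon}+|x|^\epsilon\,\frac{2^\epsilon-1}{\epsilon},
\]
together with the elementary fact that $(2^\epsilon-1)/\epsilon$ and $(1-2^{-\epsilon})/\epsilon$ are bounded above and below by positive constants depending only on $n$ for $\epsilon\in(0,n)$; the latter yields $|x|^\epsilon\le C_n(R^\epsilon-|x|^\epsilon)/\epsilon$ whenever $|x|\le R/2$. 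These reduce the two expressions to one another with constants depending only on $\beta$ and $n$, completing the proof.
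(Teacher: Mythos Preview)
Your proposal is correct and follows essentially the same route as the paper: the same three-region split for $|x|\le R/2$ (near $0$, near $x$, and the annulus $2|x|<|t|<R$ where $|x-t|\approx|t|$), and the same subcases $|x|>2R$ versus $R/2<|x|\le 2R$ for the far regime. Your explicit handling of the equivalence $(R^\epsilon-(2|x|)^\epsilon)/\epsilon+|x|^\epsilon\approx(R^\epsilon-|x|^\epsilon)/\epsilon$ via the boundedness of $(2^\epsilon-1)/\epsilon$ on $(0,n)$ is in fact more careful than the paper, which simply writes ``Combining these estimates we complete the proof.''
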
 
	 
	 \begin{proof} Suppose first that  $|x| > \frac{R}{2}$. Then 
	 \begin{equation*}
	 \begin{aligned}
	\phi_{R, \gamma} (x) &  =  \int_{|t|<\frac{R}{4}}  \frac{dt }{|x-t|^{\beta} |t|^{\gamma}} + 
	 \int_{\frac{R}{4}<|t|<R}  \frac{dt }{|x-t|^{\beta}  |t|^{\gamma}}
	 \\ & \defeq I + II.  
	  \end{aligned}
	\end{equation*}
	Clearly, in the first integral $ \frac{|x|}{2}\le |x-t|\le \frac{3 |x|}{2} $,  and so $I$ is bounded above and below by 
	$$
	  \frac{\omega_n \, c(\beta)}{|x|^{\beta}} \int_{0}^{R} r^{n-1-\gamma} dr = \frac{c(\beta, n) R^{n-\gamma}}{|x|^{\beta}}. 
	$$
	 To estimate the second term, notice that, for $|x|>2 R$ and $|t|<R$,  we have 
	 $|x-t|>\frac{|x|}{2}$, so that 
	  \begin{equation*}
	 II\le  \frac{c(\beta, n)}{ R^{\gamma} \, |x|^{\beta}} \int_{\frac{R}{4}<|t|<R} dt =
	  \frac{c(\beta, n)R^{n-\gamma}} {|x|^{\beta}}.   
	\end{equation*}
	For $\frac{R}{2}< |x|< 2 R$ and $|t|<R$, we have $|x-t|< 3R$, and consequently 
	 \begin{equation*}
	 \begin{aligned}
	II & \le 
	  \frac{c(\beta, n)}{R^{\gamma}}  \int_{|x-t|< 3 R}  \frac{dt }{|x-t|^{\beta}} \\ &
	 =  \frac{\omega_n c(\beta, n)}{R^{\gamma}} \int_{0}^{3R} r^{n-1-\beta} dr 
	\\ &  = C(\beta, n) R^{n -\beta-\gamma} \\ & \le  \frac{C(\beta, n) R^{n-\gamma}}{|x|^{\beta}}. 
	  \end{aligned}
	\end{equation*}
	Thus, $I I  \le c(n, \beta) \, I$, which proves \eqref{mathcal-Kb} in the case $|x|\ge  \frac{R}{2}$. 
	
	Suppose now that $|x| \le  \frac{R}{2}$. Then 
	 \begin{equation*}
	 \begin{aligned}
	\phi_{R, \gamma} (x) &  =  \int_{|t|<\frac{|x|}{2}}  \frac{dt }{|x-t|^{\beta} |t|^{\gamma}} + 
	  \int_{\frac{|x|}{2}<|t|<2 |x|}  \frac{dt }{|x-t|^{\beta}  |t|^{\gamma}} +  \int_{2 |x|<|t|<R}  \frac{dt }{|x-t|^{\beta}  |t|^{\gamma}} 
	 \\ & \defeq I I I+ IV + V.  
	  \end{aligned}
	\end{equation*}
	
	Clearly, in the first integral $\frac{|x|}{2} < |x-t|< \frac{3|x|}{2}$, and so 
	$III$ is bounded above and below by 
	$$
	\frac{c(\beta)}{|x|^\beta}  \int_{|t|<\frac{|x|}{2}}  \frac{dt }{|t|^{\gamma}} 
	= \frac{\omega_n c(\beta)}{|x|^\beta}  \int_{0}^{\frac{|x|}{2}} r^{n-1-\gamma} dr 
	= \frac{c(\beta, n)}{(n-\gamma) 2^{n-\gamma}} |x|^{\epsilon}. 
	$$
	 
	 The second integral $IV$ is bounded above and below by 
	 $$
	  \frac{c(\gamma)}{|x|^\gamma} \int_{\frac{|x|}{2}<|t|<2 |x|}  \frac{dt }{|x-t|^{\beta}}.  
	 $$
	 Clearly, 
	 \begin{equation*}
	 \begin{aligned}
	I V & \le  
	 \frac{c(\gamma)}{|x|^{\gamma}} \int_{|x-t|<3 |x|}  \frac{dt }{|x-t|^{\beta}}  \\ & =  \frac{\omega_n c(\gamma)}{|x|^{\gamma}} \int_{0}^{3|x|} r^{n-1-\beta} dr \\ & = 
	 \frac{\omega_n c(\gamma)}{|x|^{\beta + \gamma-n}} = c(\beta, \gamma, n) 
	 |x|^{\epsilon},
	 \end{aligned}
	\end{equation*}
	so that $I V\le c(\beta, n) \, III$. 
	
	Finally, the  integral $V$ is bounded above and below by 
	\begin{equation*}
	 \begin{aligned}
	& c(\beta) \int_{2 |x|<|t|<R}  \frac{dt }{|t|^{\gamma + \beta}} 
	\\ & = c(\beta) \int_{2 |x|}^{R} r^{n-1-\gamma-\beta} dr\\ & = c(\beta) \frac{R^{\epsilon} - (2 |x|)^{\epsilon}}{\epsilon}. 
	 \end{aligned}
	\end{equation*}
	Combining these estimates we complete the proof of \eqref{mathcal-Kb}. 
	  \end{proof}
	 
	 By Lemma \ref{lemma} with $\beta=(n-2 \alpha)q$, $R=R_k$, $\epsilon=\epsilon_k$, 
	 and $\gamma=\gamma_k = n-\beta - \epsilon_k$, we obtain, for $k=2, 3, \ldots$,  
	 \begin{equation}\label{mathcal-Ka} 
	  \begin{aligned}
	 \phi_{R_k, \gamma_k} (x-x_k) & = \int_{|t+x_k|<R_k}  \frac{dt }{|x-t|^{(n-2 \alpha) q}|t+x_k|^{\gamma}} \\& \le C(\alpha, q, n) \, \left\{ \begin{array}{ll} 
	  \frac{R_k^{\epsilon_k}}{\epsilon_k} & \quad \textrm{ if ~~} \, \,   |x-x_k|< 1,\\ & \\
	 \frac{R_k^{\epsilon_k}-1}{\epsilon_k} & \quad \textrm{ if ~~} \, \,   1\le  |x-x_k|\le  \frac{R_k}{2},\\ & \\ R_k^{\epsilon_k} & \quad \textrm{ if ~~} \, \,  |x-x_k| >
	\frac{R_k}{2}.
	\end{array} \right.
	\end{aligned}
	  \end{equation}
	  In the case $k=1$, we use the estimate  
	  $\phi_{R_1, \gamma_1} (x-x_1)\le C(\alpha, q, n)
	   \frac{R_1^{\epsilon_1}}{\epsilon_1}$ for all $x \in \R^n$.

	 We next estimate 
	 \begin{equation}\label{mathcal-K} 
	 \begin{aligned}
	\K(\sigma) & = \sup_{x\in \R^n} \sum_{k=1}^{+\infty} c_k \,  \phi_{R_k, \gamma_k} (x-x_k) 
	\\
	&  \le 
	 \sup_{x\in \R^n} \sum_{|x-x_k| \le 1} c_k \,  \phi_{R_k, \gamma_k} (x-x_k) \\& + \sup_{x\in \R^n}  \sum_{1< |x-x_k| <\frac{R_k}{2}} c_k \,  \phi_{R_k, \gamma_k} (x-x_k) \\ & +  \sup_{x\in \R^n} \sum_{|x-x_k| \ge \frac{R_k}{2}} c_k \,  \phi_{R_k, \gamma_k} (x-x_k) \\ & \defeq  I + II + III.
	 \end{aligned}
	\end{equation}

	Suppose that $j \le |x| \le j+1$ for some $j=0, 1, \ldots$. We first estimate $I$. Since $|x-x_k|\le 1$, 
	and $|x_k|=k$, 
	it follows that 
	$$k=|x_k| \le 1 +|x| \le 1+ |x-x_k| + |x_k|= k+2.$$ Consequently, 
	$j-1\le k \le j+2$ if $j \ge 2$, and $1\le k \le 3$ if $j=0, 1, 2$. Hence, the corresponding sum contains no more 
	than four terms, and therefore 
	\begin{equation*}
	 \begin{aligned}
	I & \defeq \sup_{x\in \R^n} \sum_{|x-x_k| \le 1} c_k \,  \phi_{R_k, \gamma_k} (x-x_k) \\ &  \le C(\alpha, q, n) \, \sup_{j\ge 0} \, \, \sum_{\max(j-1, 1)  \le k \le \max (j+2, 3)} \frac{c_k \, R_k^{\epsilon_k}}{\epsilon_k}\\ & \le C(\alpha, q, n),  
	 \end{aligned}
	\end{equation*}
	since by \eqref{r-k} and \eqref{sums},  
	$$\sup_{k\ge 1} \, R_k^{\epsilon_k}< +\infty, \quad {\rm and} \quad \sup_{k\ge 1} \, \frac{c_k}{\epsilon_k} < +\infty.$$

	To estimate $II$, notice that $0< \epsilon_k \log R_k \le C$, and consequently 
	$$
	\frac{R_k^{\epsilon_k}-1}{\epsilon_k}\le C \log R_k. 
	$$
	Hence, 
	by \eqref{mathcal-Ka} and \eqref{choice}, 
	\begin{equation*}
	 \begin{aligned}
	II & \defeq \sup_{x\in \R^n} \sum_{1< |x-x_k| <\frac{R_k}{2}} c_k \,  \phi_{R_k, \gamma_k} (x-x_k) \\ & \le 
	C(\alpha, q, n)  \sup_{x\in \R^n} \sum_{1< |x-x_k| <\frac{R_k}{2}} \frac{c_k \, (R_k^{\epsilon_k}-1)}{\epsilon_k} \\ & \le C(\alpha, q, n) \sum_{k=1}^{+\infty} \, c_k \, \log R_k < +\infty.   
	 \end{aligned}
	\end{equation*}

	Finally, we estimate $III$ using \eqref{mathcal-Ka} and \eqref{choice}. Since $\sup_k  R_k^{\epsilon_k} < +\infty$, we deduce 
	\begin{equation*}
	 \begin{aligned}
	III & \defeq\sup_{x\in \R^n}  \sum_{|x-x_k| \ge \frac{R_k}{2}} c_k \,  \phi_{R_k, \gamma_k} (x-x_k)\\
	& \le C(\alpha, q, n)   \sup_{x\in \R^n}  \sum_{|x-x_k| \ge \frac{R_k}{2}} c_k \, R_k^{\epsilon_k} 
	\\
	&    \le C(\alpha, q, n)  
	  \sum_{k=1}^{+\infty} c_k  \le C(\alpha, q, n).  
	\end{aligned}
	\end{equation*}
	 This proves \eqref{B}.

	 It remains to verify \eqref{C} for $\sigma=\sum_{k=1}^{+\infty} c_k   \sigma_k$ and $\nu = \sum_{j=1}^{+\infty} a_j \delta_{x_j}$ defined above. We estimate  
	 \begin{equation*}
	 \begin{aligned}
	 \Vert \I_{2 \alpha} \nu\Vert^q_{L^q(\sigma)} & =
	  \sum_{k=1}^{+\infty} c_k  \int_{\R^n} \Big ( \sum_{j=1}^{+\infty} \frac{a_j}{|x+x_j|^{n-2 \alpha} }
	  \Big)^q d \sigma_k \\ & \ge  \sum_{k=1}^{+\infty} c_k  \int_{\R^n} \frac{a_k^q}{|x+x_k|^{q(n-2 \alpha)} }
	 d \sigma_k 
	  \\ & = \sum_{k=1}^{+\infty} c_k \, a_k^q \int_{|x+x_k|<R_k} 
	  \frac{dx}{|x+x_k|^{(n-2 \alpha)q+\gamma_k}}. 
	 \end{aligned}
	\end{equation*} 
	Since 
	\begin{equation*}
	 \begin{aligned}
	\int_{|x+x_k|<R_k} \frac{dx}{|x+x_k|^{(n-2 \alpha)q+\gamma_k}}& = \int_{|x|<R_k} \frac{dx}{|x|^{(n-2 \alpha)q+\gamma_k}} \\ &= \omega_n \int_{0}^{R_k} r^{-1+ \epsilon_k} dr=\omega_n \frac{R_k^{\epsilon_k}}{\epsilon_k}, 
	\end{aligned}
	\end{equation*}
	and $R_k^{\epsilon_k}\ge 1$, it follows  by \eqref{choice} that 
	\begin{equation*}
	 \begin{aligned}
	  \Vert \I_{2 \alpha} \nu\Vert^q_{L^q(\sigma)}  & \ge  \omega_n  \sum_{k=1}^{+\infty} \frac{c_k \, a_k^q}{\epsilon_k}\\ & =  \omega_n  \sum_{k=1}^{+\infty}\frac{1}{k \log(k+1)}=+\infty.
	\end{aligned}
	\end{equation*}


\end{document}